\numberwithin{equation}{section}
\numberwithin{figure}{section}
\theoremstyle{plain}
\newtheorem{thm}{Theorem}[section]
  \newtheorem*{question*}{Question}
  \newtheorem*{conjecture*}{Conjecture}
  \newtheorem{prop}[thm]{Proposition}
  \newtheorem{lyxalgorithm}[thm]{Algorithm}
  \theoremstyle{definition}
  \newtheorem{example}[thm]{Example}
  \newtheorem{defn}[thm]{Definition}
  \theoremstyle{remark}
  \newtheorem*{acknowledgement*}{Acknowledgement}
\newcommand{\sep}{, }
\begin{document}
	\global\long\def\rad{\text{radii}}

\global\long\def\radius{\text{radius}}

\global\long\def\set#1#2{\left\{  \vphantom{#1}\vphantom{#2}#1\right.\left|\ #2\vphantom{#1}\vphantom{#2}\right\}  }

\global\long\def\petalangle#1#2#3{\theta(#1,#2,#3)}

\global\long\def\a{\bm{\alpha}}

\global\long\def\b{\bm{\beta}}

\global\long\def\c{\bm{\gamma}}

\global\long\def\t{\bm{\tau}}

\global\long\def\petalangvec#1{\overline{\alpha}^{(\mathbf{#1})}}

\global\long\def\petalnumvec#1{\overline{\xi}^{(#1)}}

\global\long\def\Nzero{\mathbb{N}_{0}}

\global\long\def\tuples{\mathbb{T}}

\global\long\def\N{\mathbb{N}}

\global\long\def\R{\mathbb{R}}

\global\long\def\Q{\mathbb{Q}}

\global\long\def\Z{\mathbb{Z}}

\global\long\def\duality#1#2{\left\langle \vphantom{#1}\vphantom{#2}#1\right.\left|\ #2\vphantom{#1}\vphantom{#2}\right\rangle }

\global\long\def\dot#1#2{#1\cdot#2}

\global\long\def\parenth#1{\left(#1\right)}

\global\long\def\abs#1{\left|#1\right|}

\global\long\def\snonhex#1{\text{s-NonHex}\left(#1\right)}

\global\long\def\rnonhex#1{\text{r-NonHex}\left(#1\right)}

\global\long\def\ononhex#1{\text{1-NonHex}\left(#1\right)}

\global\long\def\sbounds#1{\text{s-Bounds}\left(#1\right)}

\global\long\def\rbounds#1{\text{r-Bounds}\left(#1\right)}

\global\long\def\rfewlargeneighbors#1{\text{r-FewLargeNeighbors}\left(#1\right)}

\global\long\def\rboundsextra#1{\text{r-BoundsExtra}\left(#1\right)}

\global\long\def\srdisjunct#1{\text{sr-Disjunct}\left(#1\right)}

\global\long\def\seq#1{\text{Seq}\left(#1\right)}

\global\long\def\modtwo#1{\text{Mod2}\left(#1\right)}

\global\long\def\snec#1{\text{s-Necessary}\left(#1\right)}

\global\long\def\rnec#1{\text{r-Necessary}\left(#1\right)}

\global\long\def\onec#1{\text{1-Necessary}\left(#1\right)}

\global\long\def\rverticalcont#1{\text{r-VerticalContour}\left(#1\right)}

	\newcommand{\ShortPaperTitle}{Compact 3-packings of the plane}
\newcommand{\PaperTitle}{On compact packings of the plane\\ with circles of three radii}

\newcommand{\paperabstract}{%
A compact circle-packing $P$ of the Euclidean plane is a set of circles
which bound mutually disjoint open discs with the property that, for
every circle $S\in P$, there exists a maximal indexed set $\{A_{0},\ldots,A_{n-1}\}\subseteq P$
so that, for every $i\in\{0,\ldots,n-1\}$, the circle $A_{i}$ is
tangent to both circles $S$ and $A_{i+1\mod n}.$ 

We show that there exist at most $13617$ pairs $(r,s)$ with $0<s<r<1$
for which there exist a compact circle-packing of the plane consisting
of circles with radii $s$, $r$ and $1$.

We discuss computing the exact values of such $0<s<r<1$ as roots
of polynomials and exhibit a selection of compact circle-packings
consisting of circles of three radii. We also discuss the apparent
infeasibility of computing \emph{all }these values on contemporary
consumer hardware.  }

\newcommand{\MSCCodesPrimary}{%
	52C15
}%
\newcommand{\MSCCodesSecondary}{%
 	68U05\sep 
	05A99
}%
\newcommand{\paperkeywords}{compact circle packing\sep three-packings} %

	\newcommand{\Title}{}

\newcommand{\Miek}{Miek Messerschmidt}

\newcommand{\MiekEmail}{mmesserschmidt@gmail.com}

\newcommand{\UPAddress}{Department of Mathematics and Applied Mathematics; University of Pretoria; Private~bag~X20 Hatfield; 0028 Pretoria; South Africa}

\newcommand{\UPAddresswithbreaks}{%
Department of Mathematics and Applied Mathematics\\
University of Pretoria\\
Private~bag~X20\\
Hatfield\\
0028 Pretoria\\
South Africa}

	\newcommand{\PeopleAck}{The author would like to express his thanks to Frits Veerman, Eder
Kikianty and Janko B\"ohm for helpful conversations, and to the University
of Kaiserslautern which graciously provided some computational resources
to the author.}

\newcommand{\ClaudeLeonAck}{The author's research was funded by the Claude Leon Foundation.}

	\title[\expandafter\uppercase\expandafter{\ShortPaperTitle}]{\expandafter\uppercase\expandafter{\PaperTitle}}
	\author{\Miek}
	\address{\Miek\UPAddress}
	\email{\MiekEmail}
	\begin{abstract}%
	\paperabstract %
	\end{abstract}

	\subjclass[2010]{Primary: \MSCCodesPrimary. Secondary: \MSCCodesSecondary}
	\keywords{\paperkeywords}

	\maketitle

	\section{Introduction}

By a \emph{circle-packing }(or just \emph{packing}) $P$ we mean a
set of circles in the Euclidean plane, so that the open discs bounded
by the circles are pairwise disjoint. We define $\rad(P):=\set{\radius(S)}{S\in P}$.
If $|\rad(P)|<\infty$, we will assume that $P$ is maximal and that
it is scaled so that $\max\rad(P)=1$. For $n\in\N$, we will say
$P$ is an \emph{$n$-packing} if $|\rad(P)|=n$. We say a circle-packing
$P$ is \emph{compact} if, for every circle $S\in P$, there exists
some $m\in\N$ and a maximal indexed set of circles $\{A_{0},\ldots,A_{m-1}\}\subseteq P$
so that all the circles $A_{0},\ldots,A_{m-1}$ are tangent to $S$
and, for every $i\in\{0,\ldots,m-1\}$, the circle $A_{i}$ is tangent
to $A_{i+1\mod m}$. The circles $A_{0},\ldots,A_{m-1}$ are called
the \emph{neighbors} of $S$. For $n\in\N$, we define the sets
\[
\Delta_{n}:=\set{(r_{i})_{i=1}^{n-1}\in(0,1)^{n-1}}{0<r_{n-1}<\ldots<r_{1}<1}
\]
 and 
\[
\Pi_{n}:=\set{(r_{i})\in\Delta_{n}}{\begin{array}{c}
\text{There exists a compact \ensuremath{n}-packing \ensuremath{P}}\\
\text{with }\rad(P)=\{r_{1},\ldots,r_{n-1},1\}.
\end{array}}.
\]

\begin{figure}[b]
	\fbox{\includegraphics[clip,width=1\textwidth]{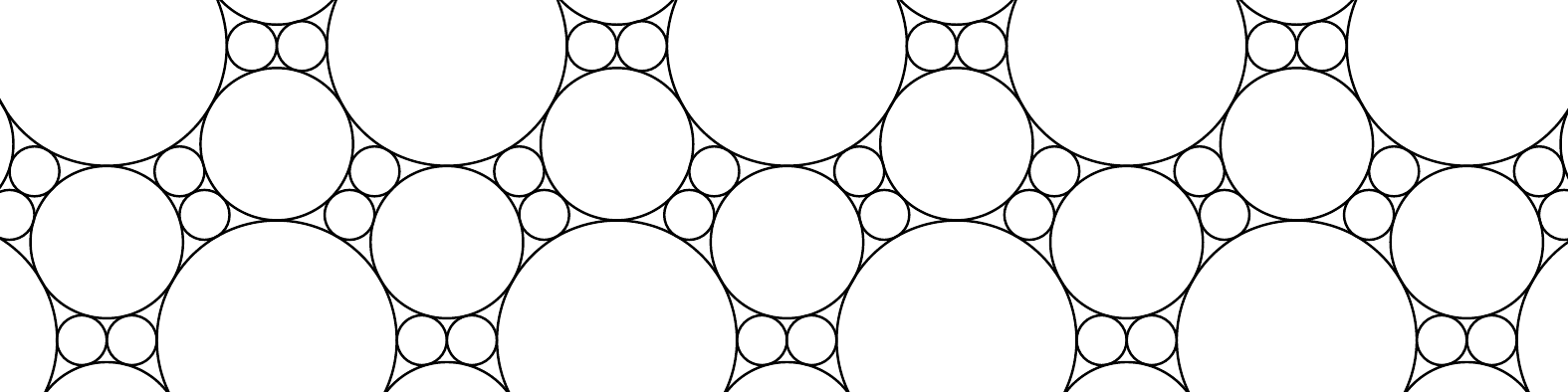}}\caption{\label{fig:first-non-trivial-example}
	A compact circle-packing $P$
	with $\protect\rad(P)=\{s_{0},r_{0},1\}$ where $s_{0}\approx0.208266$
	and $r_{0}\approx0.635671$ are respective roots of the polynomials
	$1+2s-27s^{2}-28s^{3}+4s^{4}$ and $-1-12r-18r^{2}+60r^{3}+3r^{4}$.
	We have $(r_{0},s_{0})\in\Pi_{3}$, but there exists no compact circle-packing
	$Q$ with $\protect\rad(Q)=\{r_{0},1\}$, \cite{Kennedy}.}
	\end{figure}

In \cite{Kennedy}, Kennedy proved that $|\Pi_{2}|=9$, i.e., that
there exist exactly nine values of $r_{0}\in(0,1)$ for which there
exist compact $2$-packings $P$ with $\rad(P)=\{r_{0},1\}$. Eight
of these nine values were known previously: seven values appear in
\cite{Toth} and a further one in \cite{LikosHenley}. Kennedy computed
the remaining value\footnote{A root of the polynomial $r^{8}-8r^{7}-44r^{6}-232r^{5}-482r^{4}-24r^{3}+388r^{2}-120r+9$.}
$r_{0}\approx0.545151$ and demonstrated the existence of a compact
$2$-packing $P$ with $\rad(P)=\{r_{0},1\}$.\medskip

In this paper we will concern ourselves with compact $3$-packings. 

Of course, one may construct a compact $3$-packing by packing circles
into the interstitial gaps of a compact $2$-packing, hence $|\Pi_{3}|\geq|\Pi_{2}|=9$.
Therefore the first question to ask is whether there exists a compact
$3$-packing that does not arise in this way. By merely guessing,
it is possible to construct such a packing, cf. Figure~\ref{fig:first-non-trivial-example}.
Another such packing appears in \cite[Fig. 15. 27/1, p.187]{Toth}.
Hence $|\Pi_{3}|>|\Pi_{2}|=9$ and since not all compact $3$-packings
arise from compact $2$-packings by filling interstitial gaps, we
are motivated to ask:
\begin{question*}
What is the cardinality of $\Pi_{3}$?
\end{question*}
The first goal of this paper is to answer this question by proving
that $\Pi_{3}$ is finite (cf. Theorem~\ref{thm:set-radii-admitting-3packings-are-finite}).
The second goal is to obtain the bound $|\Pi_{3}|\leq13617$ (cf.
Sections~\ref{sec:Numerical-computation-of-upperbound-of-Pi3} and~\ref{sec:exact-values-for-L}).

We briefly describe the analysis leading up to this result. 

The main idea follows the arguments presented in \cite{Kennedy} quite
closely in spirit, but does become more technical and relies significantly
on searches performed by computer. The majority of the work concerns
analysis of the functions $\a,\b,\c:\Delta_{3}\to(0,\pi)^{6}$ (these
functions are defined explicitly in Section~\ref{sec:Preliminaries}),
which parameterize the possible sizes of angles formed by connecting
the centers of mutually tangent circles of radii $s$, $r$ or $1$
with $0<s<r<1$. By construction (cf. Section~\ref{sec:Preliminaries}),
a necessary condition for $(r_{0},s_{0})\in\Delta_{3}$ to be an element
of $\Pi_{3}$ is that there exist specific tuples $\eta,\zeta,\xi\in\Z^{6}$
with non-negative coordinates satisfying 
\[
\eta\cdot\a(r_{0},s_{0})=\zeta\cdot\b(r_{0},s_{0})=\xi\cdot\c(r_{0},s_{0})=2\pi.
\]
I.e., the $2\pi$-contours of the three functions $\Delta_{3}\ni(r,s)\mapsto\eta\cdot\a(r,s)$,
$\Delta_{3}\ni(r,s)\mapsto\zeta\cdot\b(r,s)$ and $\Delta_{3}\ni(r,s)\mapsto\xi\cdot\b(r,s)$
intersect in $(r_{0},s_{0})$.

Theorem~\ref{thm:necessary-conditions-on-r-s-tuples} establishes
necessary conditions that such tuples $\eta,\zeta,\xi\in\Z^{6}$ must
satisfy, and one easily computes that there exist only 55 tuples $\eta$
for which it is possible to have $\eta\cdot\a(r_{0},s_{0})=2\pi$
for some $(r_{0},s_{0})\in\Pi_{3}$, (cf. Proposition~\ref{prop:55-s-angle-counts}).
In Section~\ref{sec:Contour-analysis}, by a careful and rather technical
analysis of the $2\pi$-contours of the functions $\Delta_{3}\ni(r,s)\mapsto\eta\cdot\a(r,s)$
and $\Delta_{3}\ni(r,s)\mapsto\zeta\cdot\b(r,s)$ and using the 55
tuples $\eta$ computed earlier, we establish a final necessary condition
on $\zeta\in\Z^{6}$ for $\zeta\cdot\b(r_{0},s_{0})=2\pi$ to hold
for some $(r_{0},s_{0})\in\Pi_{3}$. This final condition shows that
there can exist only finitely many such $\zeta\in\Z^{6}$, and allows
for the exact computation of all $248395$ elements of a certain set
$K\subseteq\Z^{6\times2}$ consisting of all tuples $\eta$ and $\zeta\in\Z^{6}$
which satisfy the necessary conditions that we established (cf. Proposition~\ref{prop:tuples-satisfying-necessary-conditions-are-finite}).
By observing that each element of $\Pi_{3}$ is determined by an element
from $K$ (cf. Proposition~\ref{prop:final-necessary-condition-for-3-packing}),
and that each element of $K$ determines at most one element of $\Pi_{3}$
(cf. Proposition~\ref{prop:unique-intercepts}), we conclude that
the set $\Pi_{3}$ is finite and that $|\Pi_{3}|\leq248395$ (cf.
Theorem~\ref{thm:set-radii-admitting-3packings-are-finite}). 

A further analysis of the $2\pi$-contours of the functions $\Delta_{3}\ni(r,s)\mapsto\eta\cdot\a(r,s)$
and $\Delta_{3}\ni(r,s)\mapsto\zeta\cdot\b(r,s)$ in Section~\ref{sec:Necessary-and-sufficient-conditions-for-contour-intercepts},
provides necessary and sufficient conditions for such contours to
intersect and allow for determining the sharper bound $|\Pi_{3}|\leq13617$
with methods described in Sections~\ref{sec:Numerical-computation-of-upperbound-of-Pi3}
and~\ref{sec:exact-values-for-L}.

The results of our computations are included as a dataset.

From here, further excluding elements that are not in $\Pi_{3}$ exactly
seems to be infeasible on contemporary consumer hardware. Firstly,
for a candidate element $(r_{0},s_{0})\in\Delta_{3}$ to be an element
of $\Pi_{3}$, there necessarily must exist a certain tuple $\xi\in\Z^{6}$
that satisfies $\xi\cdot\c(r_{0},s_{0})=2\pi$. However, the search
space of all $\xi\in\Z^{6}$ that might satisfy $\xi\cdot\c(r_{0},s_{0})=2\pi$
can sometimes be very large (containing up to $7\times10^{21}$ elements),
and is hence very time-consuming to sift through (cf. Section~\ref{sec:Numerical-computation-of-upperbound-of-Pi3}).
Secondly, given a numerical approximation of a candidate element $(r_{0},s_{0})$
of $\Pi_{3}$, it is possible to compute polynomials which have the
exact values of $r_{0}$ and $s_{0}$ as roots (cf. Section~\ref{sec:exact-values-for-L}).
Although computing these polynomials proceeds through a simple algorithm
(Algorithm~\ref{alg:detrig}) together with computing standard Gr\"obner
bases, performing the actual computation can be very time-consuming
and RAM intensive depending on the input. 

It is however possible to compute certain elements of $\Pi_{3}$ exactly,
and we display an arbitrary (but far from exhaustive) selection of
compact $3$-packings in the final section.

\medskip

The fact that both $\Pi_{2}$ and $\Pi_{3}$ are finite, and that
not every compact $3$-packing arises from a compact $2$-packing
by filling interstitial gaps, motivates the following conjecture:
\begin{conjecture*}
For every $n\in\N$, the set $\Pi_{n}$ is finite and the sequence
$(|\Pi_{n}|)_{n\in\N}$ is strictly increasing. 

Furthermore, for every $n\in\N$, there exists an element $(r_{n-1},r_{n-2},\ldots,r_{1})\in\Pi_{n}$
with $(r_{n-2},\ldots,r_{1})\notin\Pi_{n-1}$, i.e., not every compact
$n$-packing arises from filling interstitial gaps of a compact $(n-1)$-packing.
\end{conjecture*}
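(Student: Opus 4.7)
The plan is to mimic the strategy used by Kennedy for $n=2$ and in the present paper for $n=3$, scaling up the combinatorial--algebraic apparatus to arbitrary $n$. For each $n$, one introduces functions $\a^{(1)},\ldots,\a^{(n)}:\Delta_n\to(0,\pi)^{\binom{n+1}{2}}$ whose coordinates record the possible sizes of angles formed at a circle of a given radius by pairs of tangent neighbors (the angle contributed by two tangent neighbors of a central circle depends only on the three radii involved, exactly as for the functions $\a,\b,\c$ of this paper). A necessary condition for $(r_1,\ldots,r_{n-1})\in\Pi_n$ is the simultaneous existence of non-negative integer tuples $\eta^{(1)},\ldots,\eta^{(n)}\in\Z_{\ge 0}^{\binom{n+1}{2}}$ satisfying $\eta^{(i)}\cdot\a^{(i)}(r_1,\ldots,r_{n-1})=2\pi$ for every $i$. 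Since this amounts to $n$ scalar equations in $n-1$ unknowns, the simultaneous solution locus in $\Delta_n$ is generically zero-dimensional, which is what ultimately should drive the finiteness claim.

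The first substantive task is to generalize Theorem~\ref{thm:necessary-conditions-on-r-s-tuples} to obtain effective upper bounds on the entries of each admissible tuple $\eta^{(i)}$. Each coordinate of $\a^{(i)}$ is bounded below by a strictly positive constant on any subregion of $\Delta_n$ bounded away from the boundary, so on such a subregion only finitely many tuples $\eta^{(i)}$ can satisfy the flower equation. The delicate part is controlling the behaviour near the boundary of $\Delta_n$, where angles degenerate, together with generalizing the contour analysis of Section~\ref{sec:Contour-analysis} to show that the $2\pi$-contours of the $n$ functions $\Delta_n\ni r\mapsto \eta^{(i)}\cdot\a^{(i)}(r)$ meet in only finitely many points. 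A natural route is via a transversality or semialgebraic argument on the (real-)algebraic varieties cut out by these transcendental equations after detrigonometrization in the spirit of Algorithm~\ref{alg:detrig}.

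For the monotonicity $|\Pi_{n+1}|\ge |\Pi_n|$, any compact $n$-packing $P$ with $\rad(P)=\{1,r_1,\ldots,r_{n-1}\}$ admits at least one class of smallest interstitial gap, and the radius $r_n$ fitting this gap is uniquely determined, producing an element $(r_1,\ldots,r_{n-1},r_n)\in\Pi_{n+1}$. Provided distinct tuples in $\Pi_n$ give rise to distinct tuples in $\Pi_{n+1}$ in this manner (which should follow since the sub-tuple $(r_1,\ldots,r_{n-1})$ can be recovered from the resulting element of $\Pi_{n+1}$ by discarding the last coordinate), this yields an injection $\Pi_n\hookrightarrow\Pi_{n+1}$ and hence the non-strict inequality; strict inequality then reduces to the second assertion of the conjecture.

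The main obstacle will be this second assertion: producing, for every $n$, a compact $n$-packing in which all $n$ radii participate essentially, in the sense that $(r_{n-2},\ldots,r_1)\notin\Pi_{n-1}$. For $n=3$ this is witnessed by the explicit example in Figure~\ref{fig:first-non-trivial-example}, but there is no obvious inductive procedure that builds such a packing for $n+1$ out of one for $n$, since any construction that merely inserts a new radius into the interstices of an existing packing produces, by design, a sub-tuple that lies in $\Pi_{n-1}$. A plausible strategy is to search for flower combinatorics at the smallest circle that force every other radius to appear among its neighbors, and then to solve the resulting transcendental system for a real root in $\Delta_n$; but both the combinatorial search and the real-solvability verification appear to grow severely with $n$, as already indicated by the infeasibility remarks of Sections~\ref{sec:Numerical-computation-of-upperbound-of-Pi3} and~\ref{sec:exact-values-for-L} for $n=3$, and will almost certainly require genuinely new ideas beyond the methods of the present paper.
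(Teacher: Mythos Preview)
The statement you are attempting to prove is presented in the paper as an open \emph{conjecture}; there is no proof in the paper to compare against. Immediately after stating it, the paper explicitly remarks that ``the computational nature of the problem might be a hindrance to proving this conjecture'' and singles out the fact that Proposition~\ref{prop:55-s-angle-counts} (the exhaustive list of $55$ tuples) is needed to bootstrap the contour bounds in Propositions~\ref{prop:s-contour-analysis}(\ref{enu:s-contour-global-bound}) and~\ref{prop:r-contour-analysis}(\ref{enu:r-contour-global-bound}), suggesting that a uniform argument for all $n$ may be out of reach by these methods.

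Your proposal is, accordingly, not a proof but a research outline, and you are candid about this: phrases such as ``should follow'', ``a plausible strategy'', and ``will almost certainly require genuinely new ideas'' make clear that the key steps are not carried out. A few of the gaps are more serious than your wording suggests. First, the finiteness argument in the paper does not proceed by any general transversality or semialgebraic principle: it hinges on the explicit computer enumeration of the $55$ admissible $\eta$-tuples, which is then used to produce the concrete region $\{s\le 10^{-1}r\}$ disjoint from all $\eta$-contours, which in turn yields the bound $\zeta_3<35$. There is no evident analogue of this bootstrap for general $n$ without performing a similar exhaustive search at the smallest radius, and that search itself presupposes a priori bounds that do not obviously generalize. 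Second, the remark that $n$ equations in $n-1$ unknowns give a ``generically zero-dimensional'' locus is imprecise (such a system is overdetermined, so generically empty); what matters is finiteness over the infinite family of admissible tuples, which is exactly the hard part. Third, in your monotonicity sketch, inscribing a circle in the smallest triangular interstice does give a new compact packing, but you must also rule out that the inscribed radius coincides with one of the existing $n$ radii; otherwise the image does not lie in $\Pi_{n+1}$.

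In short: the paper offers no proof, your document is a plan rather than a proof, and the plan correctly identifies the same obstacles the paper flags without resolving them.
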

The computational nature of the problem might be a hindrance to proving
this conjecture. Proposition~\ref{prop:55-s-angle-counts}, which
is established purely by exhaustion performed by computer, is required
to bootstrap the proofs of Propositions~\ref{prop:s-contour-analysis}(\ref{enu:s-contour-global-bound})
and~\ref{prop:r-contour-analysis}(\ref{enu:r-contour-global-bound})
which together are crucial in our proof that $|\Pi_{3}|<\infty$.
This suggests that establishing $|\Pi_{n}|<\infty$ for specific values
of $n\in\N$ might be easier, but perhaps less interesting, than a
proof of the above conjecture in its full generality.

\section{Preliminary definitions, results and notation\label{sec:Preliminaries}}

We define $\Nzero:=\N\cup\{0\}$ and $\tuples:=\Nzero^{6}$. Let three
mutually tangent circles $A,B$ and $C$ have respective radii $a,b$
and $c$. By the cosine rule, the angle $\petalangle abc$ formed
at the center of $A$ by the line segments connecting the center of
$A$ with the centers of $B$ and $C$ is given by 
\[
\petalangle abc=\arccos\parenth{\frac{(a+b)^{2}+(a+c)^{2}-(b+c)^{2}}{2(a+c)(a+b)}}.
\]
We define the functions $\a,\b,\c:\Delta_{3}\to(0,\pi)^{6}$, for
$(r,s)\in\Delta_{3}$ by 
\begin{align*}
\a(r,s) & :=(\petalangle s11,\petalangle srr,\petalangle sss,\petalangle s1r,\petalangle s1s,\petalangle srs),\\
\b(r,s) & :=(\petalangle r11,\petalangle rrr,\petalangle rss,\petalangle r1r,\petalangle r1s,\petalangle rrs),\\
\c(r,s) & :=(\petalangle 111,\petalangle 1rr,\petalangle 1ss,\petalangle 11r,\petalangle 11s,\petalangle 1rs).
\end{align*}

Let $(r_{0},s_{0})\in\Pi_{3}$ be fixed and let $P$ be a compact
$3$-packing with $\rad(P)=\{s_{0},r_{0},1\}$. For any $D\in P$
of radius $t\in\{s_{0},r_{0},1\}$. We set 
\[
\bm{\tau}:=\begin{cases}
\a & t=s_{0}\\
\b & t=r_{0}\\
\c & t=1.
\end{cases}
\]
Let $\{A_{0},\ldots,A_{n-1}\}\subseteq P$ be the sequence of neighbors
of $D$ for some $n\in\N$. Connecting the center of $D$ with the
centers of $A_{0},\ldots,A_{n-1}$, we denote the \emph{angle-count
for $D$ }by $\xi^{(D)}\in\tuples$, which has, for $i\in\{1,2,3,4,5,6\}$,
its $i$'th coordinate defined as number of times the angle $\bm{\tau}_{i}(r_{0},s_{0})$
occurs around the center of $D$. Explicitly: We define 
\[
\begin{array}{rclcrcrcl}
\kappa_{1,1} & := & (1,0,0,0,0,0) & \quad & \kappa_{1,r_{0}} & := & \kappa_{r_{0},1} & := & (0,0,0,1,0,0)\\
\kappa_{r_{0},r_{0}} & := & (0,1,0,0,0,0) &  & \kappa_{1,s_{0}} & := & \kappa_{s_{0},1} & := & (0,0,0,0,1,0)\\
\kappa_{s_{0},s_{0}} & := & (0,0,1,0,0,0) &  & \kappa_{r_{0},s_{0}} & := & \kappa_{s_{0},r_{0}} & := & (0,0,0,0,0,1),
\end{array}
\]
and define $\sigma(j):=\radius(A_{j})\in\{s_{0},r_{0},1\}$ for $j\in\{0,\ldots,n-1\}$.
Then 
\[
\xi^{(D)}:=\sum_{j=0}^{n-1}\kappa_{\sigma(j),\sigma(j+1\mod n)}.
\]
Since $\{A_{0},\ldots,A_{n-1}\}$ is the sequence of neighbors of
$D$, it is clear that 
\[
\xi^{(D)}\cdot\bm{\tau}(r_{0},s_{0})=2\pi.
\]
Hence, for all circles $A$, $B$ and $C$ in $P$ with respective
radii $s_{0},r_{0}$ and $1$, we have 
\[
\xi^{(A)}\cdot\a(r_{0},s_{0})=\xi^{(B)}\cdot\b(r_{0},s_{0})=\xi^{(C)}\cdot\c(r_{0},s_{0})=2\pi.
\]
Therefore, a necessary condition for $(r_{0},s_{0})$ to be an element
of $\Pi_{3}$ is that the $2\pi$-contours of the functions
\begin{align*}
 & \Delta_{3}\ni(r,s)\mapsto\xi^{(A)}\cdot\a(r,s),\\
 & \Delta_{3}\ni(r,s)\mapsto\xi^{(B)}\cdot\b(r,s),\\
 & \Delta_{3}\ni(r,s)\mapsto\xi^{(C)}\cdot\c(r,s),
\end{align*}
intersect in $(r_{0},s_{0})$. 

\section{\label{sec:Necessary-conditions-on-angle-counts}Necessary conditions
on angle-counts}

Let $(s_{0},r_{0})\in\Pi_{3}$ and let $P$ be a compact $3$-packing
with $\rad(P)=\{s_{0},r_{0},1\}$. In the current section we show
that there necessarily exist circles $A$,$B$ and $C$ in $P$ of
respective radii $s_{0}$,$r_{0}$ and $1$, whose angle counts $\xi^{(A)}$,
$\xi^{(B)}$ and $\xi^{(C)}$ must necessarily satisfy certain conditions.
These conditions are collected in Theorem~\ref{thm:necessary-conditions-on-r-s-tuples}.
This theorem prompts the definition of a number of predicates in Definition~\ref{def:predicates}
which can be easily implemented in any programming language. Finally,
Proposition~\ref{prop:55-s-angle-counts} lists all 55 possible values
that the tuple $\xi^{(A)}$ can take on. This observation is crucial
in later sections for showing that the tuple $\xi^{(B)}$ may also
only take on finitely many values. 

We begin by observing that for each radius $t\in\{s_{0},r_{0},1\}$,
there must exist a circle $D\in P$ of $t$ that is not fully surrounded
by $6$ neighbors with radius $t$. 
\begin{prop}
\label{prop:not-in-center-of-hexagon}Let $(r_{0},s_{0})\in\Pi_{3}$
and let $P$ be any compact $3$-packing with $\rad(P)=\{s_{0},r_{0},1\}$. 
\begin{enumerate}
\item There exists a circle $A\in P$ with radius $s_{0}$ so that $\xi^{(A)}\cdot(1,1,0,1,1,1)\neq0$
and $\xi^{(A)}\cdot(0,0,1,0,0,0)<6$.
\item There exists a circle $B\in P$ with radius $r_{0}$ so that $\xi^{(B)}\cdot(1,0,1,1,1,1)\neq0$
and $\xi^{(B)}\cdot(0,1,0,0,0,0)<6$. 
\item There exists a circle $C\in P$ with radius $1$ so that $\xi^{(C)}\cdot(0,1,1,1,1,1)\neq0$
and $\xi^{(C)}\cdot(1,0,0,0,0,0)<6$.
\end{enumerate}
\end{prop}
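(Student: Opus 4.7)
The plan has four parts. First, I would observe that in each of (1), (2), (3) the two stated conditions on the angle-count are equivalent. For (1): by the cosine rule, $\petalangle{s}{s}{s}=\arccos(1/2)=\pi/3$, so six $\petalangle{s}{s}{s}$-angles around a single circle already sum exactly to $2\pi$. Hence $\xi^{(A)}\cdot(0,0,1,0,0,0)=6$ forces every other coordinate of $\xi^{(A)}$ to vanish, giving $\xi^{(A)}=(0,0,6,0,0,0)$ and therefore $\xi^{(A)}\cdot(1,1,0,1,1,1)=0$; the contrapositive yields the equivalence. The common content of the two conditions is then that the circle $A$ has at least one neighbor of radius different from $s_{0}$. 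Identical reasoning with $\petalangle{r}{r}{r}=\pi/3$ and $\petalangle{1}{1}{1}=\pi/3$ settles (2) and (3).

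Next, I would prove (1), (2), (3) uniformly by contradiction. Fix $t\in\{s_{0},r_{0},1\}$ and suppose that every circle of radius $t$ in $P$ is tangent only to $t$-circles. By the previous step, each such circle has exactly six $t$-neighbors in the unique hexagonal configuration. Starting from any $t$-circle and iteratively attaching its six hexagonally-placed $t$-neighbors, then theirs, and so on, one builds an infinite hexagonal sub-packing $H\subseteq P$ whose centers form a triangular lattice in $\R^{2}$ of nearest-neighbor distance $2t$. The complement in $\R^{2}$ of the closed discs bounded by members of $H$ is then a disjoint union of open curvilinear triangular regions, each bounded by arcs of three mutually tangent circles of $H$.

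Since $\abs{\rad(P)}=3$, there exists a circle $T\in P$ of radius $t'\neq t$. The closed disc of $T$ cannot overlap any member of $H$ and, by the standing hypothesis, cannot be tangent to one either; hence $T$ lies strictly inside one of the open interstitial triangles of $H$. Moreover, the tangency-connected component $Q$ of $T$ in $P$ contains no $t$-circle and is confined to the same interstitial triangle as $T$, since any tangency leaving this triangle would have to pass through a bounding $t$-circle of $H$. That triangle being bounded and every member of $Q$ having radius at least $\min(\{s_{0},r_{0},1\}\setminus\{t\})>0$, the set $Q$ is finite.

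Finally, I would pick $C\in Q$ whose center has maximum $y$-coordinate. The cycle of tangent neighbors of $C$ distributes around $C$ with consecutive angular gaps all lying in $(0,\pi)$ and summing to $2\pi$, and a short arc-length argument then forces some neighbor of $C$ to have center strictly above that of $C$; but this neighbor lies in $Q$, contradicting the maximality of $C$. The subtlest step of this plan is verifying that the iterative construction of $H$ genuinely assembles into a consistent infinite triangular lattice in $\R^{2}$ rather than producing conflicts: this is a rigidity observation for the hexagonal configuration, handled by induction on graph-distance from the starting $t$-circle, using that the positions of the six hexagonal neighbors of any given $t$-circle are uniquely determined by those of any three already-placed consecutive ones.
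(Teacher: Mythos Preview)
Your proof is correct and follows the same overall strategy as the paper's: argue by contradiction that if every circle of radius $t$ had only radius-$t$ neighbors, then $P$ could not contain circles of all three radii. The difference is one of detail. The paper's proof is a single sentence---it simply asserts that under this hypothesis ``$P$ either cannot contain circles of radius $s_{0}$, or cannot contain circles of radii $1$ or $r_{0}$'' and leaves the geometric content implicit. You have unpacked this assertion fully: the rigidity of the hexagonal $t$-lattice $H$, the confinement of any non-$t$-circle to a bounded interstitial triangle (using that tangency is symmetric, so no non-$t$-circle touches any $t$-circle), the finiteness of the tangency-component $Q$ inside such a triangle, and the highest-center argument (which works because all neighbor angles lie strictly in $(0,\pi)$, so the neighbor directions cannot lie in a closed semicircle). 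Your observation that the two conditions in each item are equivalent is also a nice clarification the paper does not make explicit.

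What your approach buys is rigor, particularly for cases (2) and (3) where the interstitial gaps of the hexagonal $t$-lattice are genuinely large enough to hold smaller circles, so one cannot simply invoke a size obstruction; your finiteness-plus-highest-center argument handles all three cases uniformly. What the paper's approach buys is brevity, trusting the reader to supply this picture. The step you flag as subtlest---that the iterated hexagonal neighbors assemble consistently into a triangular lattice---is indeed the one place where a careless reader might stumble, and your inductive rigidity sketch is the right way to handle it.
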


\begin{proof}
We prove (1). Suppose, for all $A\in P$ of radius $s_{0}$, that
$\xi^{(A)}\cdot(1,1,0,1,1,1)=0$. Then we must have $\xi^{(A)}\cdot(0,0,1,0,0,0)=6$
for all $A\in P$ of radius $s_{0}$, and hence $P$ either cannot
contain circles of radius $s_{0}$, or cannot contain circles of radii
$1$ or $r_{0}$. This contradicts $\rad(P)=\{s_{0},r_{0},1\}$. The
other assertions follow similarly.
\end{proof}
Next, we observe that there must exist a pair of circles of respective
radii $s_{0}$ or $r_{0}$, so that at least one of these circles
has a neighbor of radius $1$.
\begin{prop}
\label{prop:either-s-or-r-circle-must-touch-1-circle}Let $(r_{0},s_{0})\in\Pi_{3}$
and let $P$ be any compact $3$-packing with $\rad(P)=\{s_{0},r_{0},1\}$.
There exist circles $A$ and $B$ from $P$ with respective radii
$s_{0}$ and $r_{0}$, so that $\xi^{(A)}\cdot(1,0,0,1,1,0)\neq0$
or $\xi^{(B)}\cdot(1,0,0,1,1,0)\neq0$.
\end{prop}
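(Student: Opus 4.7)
The plan is a short proof by contradiction resting on Proposition~\ref{prop:not-in-center-of-hexagon}(3). First I would unpack the meaning of the two dot products. Consulting the definitions of $\a$ and $\b$, the coordinates selected by the vector $(1,0,0,1,1,0)$ are exactly those counting angles of the form $\petalangle t11$, $\petalangle t1r$, $\petalangle t1s$ at the center of a circle of radius $t\in\{s_0,r_0\}$, i.e., those angles where at least one of the two neighbours delimiting the angle has radius $1$. Consequently, for a circle $A$ of radius $s_0$, the condition $\xi^{(A)}\cdot(1,0,0,1,1,0)=0$ is equivalent to $A$ having no neighbour of radius $1$; the same equivalence holds for a circle $B$ of radius $r_0$.

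Now I would argue by contradiction. Suppose every circle of radius $s_0$ in $P$ and every circle of radius $r_0$ in $P$ satisfies the relevant vanishing condition. Then no circle of radius $s_0$ or $r_0$ in $P$ is tangent to any circle of radius $1$. Since tangency is symmetric, and since in a compact packing the neighbours of any circle $C\in P$ comprise \emph{all} circles of $P$ tangent to $C$ (this follows from the maximality of the cyclic neighbour sequence in the definition of compactness), it follows that no circle of radius $1$ in $P$ has a neighbour of radius $s_0$ or $r_0$ either. Equivalently, for every $C\in P$ of radius $1$, one has $\xi^{(C)}\cdot(0,1,1,1,1,1)=0$, which directly contradicts Proposition~\ref{prop:not-in-center-of-hexagon}(3).

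The existence of at least one $A\in P$ of radius $s_0$ and at least one $B\in P$ of radius $r_0$ is immediate from $\rad(P)=\{s_0,r_0,1\}$, so the disjunctive conclusion follows. No part of this argument looks technically demanding; the only subtle point is the symmetry of the neighbour relation in a compact packing, which is essentially a restatement of the maximality clause in the compactness definition and can be dispatched in a single sentence.
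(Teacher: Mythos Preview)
Your proof is correct and follows essentially the same contradiction strategy as the paper. The only cosmetic difference is the endpoint of the contradiction: the paper argues directly that if no circle of radius $s_0$ or $r_0$ touches a circle of radius $1$, then $P$ either contains no circles of radius $1$ or consists only of such circles, contradicting $\rad(P)=\{s_0,r_0,1\}$; you instead route the same observation through Proposition~\ref{prop:not-in-center-of-hexagon}(3), which is an equally valid (and perhaps slightly more formal) way to close the argument.
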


\begin{proof}
If it were the case that for every pair of circles $A$ and $B$ from
$P$ with respective radii $s_{0}$ and $r_{0}$ that $\xi^{(A)}\cdot(1,0,0,1,1,0)=0$
and $\xi^{(B)}\cdot(1,0,0,1,1,0)=0$, then $P$ could not contain
any circles of radius~$1$, or consists only of circles of radius~$1$.
This contradicts $\rad(P)=\{s_{0},r_{0},1\}$.
\end{proof}
Propositions~\ref{prop:mod-2-condition} through~\ref{prop:Few-large-neighbors},
establishes general necessary conditions that circles in $P$ must
satisfy.
\begin{prop}
\label{prop:mod-2-condition}Let $P$ be any compact $3$-packing.
For every circle $D\in P$, we have 
\begin{enumerate}
\item $\xi^{(D)}\cdot(2,0,0,1,1,0)=0\mod2.$
\item $\xi^{(D)}\cdot(0,2,0,1,0,1)=0\mod2.$
\item $\xi^{(D)}\cdot(0,0,2,0,1,1)=0\mod2.$
\end{enumerate}
\end{prop}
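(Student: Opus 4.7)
The plan is a straightforward double-counting argument exploiting the cyclic structure of the neighbors of $D$. Fix a circle $D \in P$ with its cyclic sequence of neighbors $\{A_0, \ldots, A_{n-1}\} \subseteq P$ (as guaranteed by compactness), and for each $t \in \rad(P)$ let $n_t$ denote the number of indices $j$ with $\radius(A_j) = t$.

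The key step is to count in two ways the number of incidences between neighbors of $D$ of radius $t$ and the angles at the center of $D$. Counting first by the neighbor yields $2n_t$, since each $A_j$ of radius $t$ participates in exactly two consecutive angles, namely those formed with $A_{j-1 \mod n}$ and $A_{j+1 \mod n}$. Counting second by the angle partitions the angles around $D$ according to the unordered pair of radii of their two bounding neighbors: each angle between two neighbors both of radius $t$ contributes $2$ incidences, each angle between one neighbor of radius $t$ and one neighbor of a different radius contributes $1$ incidence, and all other angles contribute $0$.

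By the definition of $\xi^{(D)}$, for $t = 1$ this second count equals $2\xi^{(D)}_1 + \xi^{(D)}_4 + \xi^{(D)}_5 = \xi^{(D)} \cdot (2,0,0,1,1,0)$; for $t = r_0$ it equals $2\xi^{(D)}_2 + \xi^{(D)}_4 + \xi^{(D)}_6 = \xi^{(D)} \cdot (0,2,0,1,0,1)$; and for $t = s_0$ it equals $2\xi^{(D)}_3 + \xi^{(D)}_5 + \xi^{(D)}_6 = \xi^{(D)} \cdot (0,0,2,0,1,1)$. Equating these with $2n_t$ and reducing modulo $2$ immediately yields the three congruences.

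No serious obstacle is anticipated; this is essentially a handshaking-style observation. The only point to verify is that the sequence of neighbors around $D$ is genuinely cyclic so that each neighbor is flanked by exactly two angles, and this is precisely what the definition of compactness supplies.
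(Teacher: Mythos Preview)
Your proposal is correct and is essentially the same argument as the paper's: the paper observes that each line segment from the center of $D$ to a neighbor is a leg of exactly two angles, so counting legs of a given length (equivalently, incidences of neighbors of a given radius with angles) by neighbor versus by angle yields $2n_t = \xi^{(D)}\cdot v_t$ for the three vectors $v_t$ in the statement. Your double-counting formulation is just a rephrasing of this handshaking observation.
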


\begin{proof}
Let $(r_{0},s_{0})\in\Pi_{3}$ be such that $\rad(P)=\{s_{0},r_{0},1\}$.
Consider any circle $C$ in the packing $P$ of radius $1$. The line
segments connecting the center of $C$ to the centers of its neighboring
circles in $P$, can only have lengths $2$, $1+r_{0}$ or $1+s_{0}$.
Since each such line segment is a leg of exactly two angles formed
around the center of $C$, the angle-count with a given leg-length
must be even. I.e., 
\begin{align*}
\xi^{(C)}\cdot(2,0,0,1,1,0) & =0\mod2,\\
\xi^{(C)}\cdot(0,2,0,1,0,1) & =0\mod2,\\
\xi^{(C)}\cdot(0,0,2,0,1,1) & =0\mod2.
\end{align*}
A similar argument will establish the result if $C$ has radius $r_{0}$
or $s_{0}$.
\end{proof}
\begin{prop}
\label{prop:s-upper-lower-bounds}Let $(r_{0},s_{0})\in\Pi_{3}$ and
let $P$ be any compact $3$-packing with $\rad(P)=\{s_{0},r_{0},1\}$.
For any circle $A\in P$ of radius $s_{0}$ with $\xi^{(A)}\cdot(1,1,0,1,1,1)\neq0$,
we have $\xi^{(A)}\cdot(1,1,1,1,1,1)<6$ and $\xi^{(A)}\cdot(6,6,2,6,3,3)>12$.
\end{prop}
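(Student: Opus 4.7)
The plan is to derive both inequalities from simple pointwise bounds on the component angles $\theta_i := \a_i(r_0, s_0)$, combined with the identity $\xi^{(A)} \cdot \a(r_0, s_0) = 2\pi$. First I would establish, by direct application of the cosine rule, that $\theta_3 = \petalangle{s_0}{s_0}{s_0} = \pi/3$ exactly while $\theta_i > \pi/3$ for each $i \in \{1,2,4,5,6\}$. For example, $\cos \petalangle{s_0}{r_0}{r_0} = (s_0^2 + 2 s_0 r_0 - r_0^2)/(s_0 + r_0)^2$, and the inequality $\cos\theta < 1/2$ reduces to $(s_0 - r_0)(s_0 + 3 r_0) < 0$, which holds since $0 < s_0 < r_0$. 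The remaining four cases are analogous. Writing $a_i$ for the $i$-th coordinate of $\xi^{(A)}$, the hypothesis $\xi^{(A)} \cdot (1,1,0,1,1,1) \neq 0$ asserts that at least one $a_i$ with $i \neq 3$ is strictly positive, so that
\[
2\pi = \sum_{i=1}^{6} a_i \theta_i > \frac{\pi}{3} \sum_{i=1}^{6} a_i,
\]
giving $\xi^{(A)} \cdot (1,1,1,1,1,1) < 6$.

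For the second inequality, I would pair each component with a suitable upper bound: $\theta_1, \theta_2, \theta_4 < \pi$ (since the three centers of any triple of mutually tangent circles form a non-degenerate triangle, so $\cos\theta > -1$), and $\theta_5, \theta_6 < \pi/2$ via direct computation yielding $\cos\petalangle{s_0}{1}{s_0} = s_0/(s_0+1) > 0$ and $\cos\petalangle{s_0}{r_0}{s_0} = s_0/(s_0+r_0) > 0$. Setting $c := (6, 6, 2, 6, 3, 3)$, these bounds together with $\theta_3 = \pi/3$ amount to $\theta_i \leq c_i \pi / 6$ for every $i$, with equality only at $i = 3$. Using again that some $a_i > 0$ with $i \neq 3$, one obtains
\[
2\pi = \sum_{i=1}^{6} a_i \theta_i < \frac{\pi}{6} \sum_{i=1}^{6} a_i c_i,
\]
and multiplying through by $6/\pi$ yields $\xi^{(A)} \cdot (6, 6, 2, 6, 3, 3) > 12$.

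The whole argument is a single averaging principle applied twice: once with the uniform lower bound $\pi/3$, and once with the component-specific upper bounds encoded in the vector $c$. There is no real obstacle; the only non-mechanical labor is the cosine-rule verification of the six individual angle bounds, each of which reduces to a short algebraic inequality in $s_0$ and $r_0$ exploiting $0 < s_0 < r_0 < 1$.
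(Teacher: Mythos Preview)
Your proof is correct and follows essentially the same approach as the paper: both use the identity $\xi^{(A)}\cdot\a(r_0,s_0)=2\pi$ together with the pointwise bounds $\a_i>\pi/3$ for $i\neq 3$, $\a_3=\pi/3$, and the upper bounds $\a_1,\a_2,\a_4<\pi$, $\a_5,\a_6<\pi/2$. You supply more explicit cosine-rule verifications and make the use of the hypothesis $\xi^{(A)}\cdot(1,1,0,1,1,1)\neq 0$ for strictness explicit, but the argument is otherwise identical to the paper's.
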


\begin{proof}
Since $0<s_{0}<r_{0}<1$, we have $\a_{i}(r_{0},s_{0})>3^{-1}\pi$
for all $i\in\{1,2,4,5,6\}$ and $\a_{3}(r_{0},s_{0})=3^{-1}\pi$.
Therefore, since $\xi^{(A)}\cdot\a(r_{0},s_{0})=2\pi$, we have $\xi^{(A)}\cdot(1,1,1,1,1,1)<3\pi^{-1}(\xi^{(A)}\cdot\a(r_{0},s_{0}))=6.$

On the other hand, since $0<s_{0}<r_{0}<1$, we have 
\[
\a_{1}(r_{0},s_{0}),\,\a_{2}(r_{0},s_{0}),\,\a_{4}(r_{0},s_{0}),\,2\a_{5}(r_{0},s_{0}),\,2\a_{6}(r_{0},s_{0})<\pi
\]
 and $\a_{3}(r_{0},s_{0})=3^{-1}\pi$. Therefore, since $\xi^{(A)}\cdot\a(r_{0},s_{0})=2\pi$,
we obtain $12=6\pi^{-1}(\xi^{(A)}\cdot\a(r_{0},s_{0}))<\xi^{(A)}\cdot(6,6,2,6,3,3).$
\end{proof}
\begin{prop}
\label{prop:r-upper-lower-bounds}Let $(r_{0},s_{0})\in\Pi_{3}$ and
let $P$ be any compact $3$-packing with $\rad(P)=\{s_{0},r_{0},1\}$.
For any circle $B\in P$ of radius $r_{0}$ with $\xi^{(B)}\cdot(1,0,1,1,1,1)\neq0$,
we have $\xi^{(B)}\cdot(1,1,0,1,0,0)<6$ and $\xi^{(B)}\cdot(6,2,2,3,3,2)>12.$
\end{prop}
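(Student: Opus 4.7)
The plan is to mirror the proof of Proposition~\ref{prop:s-upper-lower-bounds}, replacing $\a$ by $\b$ throughout. The starting point is the identity $\xi^{(B)}\cdot\b(r_{0},s_{0})=2\pi$, and both inequalities will follow from appropriate pointwise bounds on the components $\b_{i}(r_{0},s_{0})$ obtained by direct cosine-rule computation. A key structural difference from the $\a$-case is that $\b_{3}(r_{0},s_{0})$ and $\b_{6}(r_{0},s_{0})$ are \emph{smaller} than $\pi/3$ (not larger), since in those configurations the central $r_{0}$-circle has both neighbors of radius $s_{0}<r_{0}$; so the first inequality relies only on the bounds for $\b_{1},\b_{2},\b_{4}$.

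For the first inequality $\xi^{(B)}\cdot(1,1,0,1,0,0)<6$, the key bounds are $\b_{1}(r_{0},s_{0}),\b_{4}(r_{0},s_{0})>\pi/3$ (strictly) and $\b_{2}(r_{0},s_{0})=\pi/3$, obtained from $\cos\theta(r_{0},1,1)=1-2/(r_{0}+1)^{2}<1/2$, $\cos\theta(r_{0},1,r_{0})=r_{0}/(r_{0}+1)<1/2$, and the equilateral identity. The argument then splits into cases. If $\xi_{1}^{(B)}>0$ or $\xi_{4}^{(B)}>0$, the strict lower bounds combined with non-negativity of the remaining $\b_{i}\xi_{i}^{(B)}$ terms give $\xi^{(B)}\cdot\b(r_{0},s_{0})>(\pi/3)(\xi_{1}^{(B)}+\xi_{2}^{(B)}+\xi_{4}^{(B)})$, so any inequality $\xi^{(B)}\cdot(1,1,0,1,0,0)\geq6$ would contradict $\xi^{(B)}\cdot\b(r_{0},s_{0})=2\pi$. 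In the remaining case $\xi_{1}^{(B)}=\xi_{4}^{(B)}=0$ with $\xi_{2}^{(B)}\geq6$, the identity $\xi^{(B)}\cdot\b(r_{0},s_{0})=2\pi$ together with non-negativity of all terms forces $\xi_{2}^{(B)}=6$ and $\xi_{3}^{(B)}=\xi_{5}^{(B)}=\xi_{6}^{(B)}=0$, which contradicts the hypothesis $\xi^{(B)}\cdot(1,0,1,1,1,1)\neq0$.

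For the second inequality $\xi^{(B)}\cdot(6,2,2,3,3,2)>12$, the required strict upper bounds are $\b_{1}<\pi$ (trivial), $\b_{4},\b_{5}<\pi/2$, and $\b_{3},\b_{6}<\pi/3$, together with $\b_{2}=\pi/3$. Here $\cos\theta(r_{0},1,r_{0})=r_{0}/(r_{0}+1)>0$, $\cos\theta(r_{0},s_{0},s_{0})=1-2s_{0}^{2}/(r_{0}+s_{0})^{2}>1/2$, and $\cos\theta(r_{0},r_{0},s_{0})=r_{0}/(r_{0}+s_{0})>1/2$ all follow immediately from $s_{0}<r_{0}<1$. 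Setting $c:=(\pi,\pi/3,\pi/3,\pi/2,\pi/2,\pi/3)=(\pi/6)(6,2,2,3,3,2)$, these bounds give $\b_{i}(r_{0},s_{0})\leq c_{i}$ componentwise, with strict inequality except at $i=2$. Since the hypothesis guarantees $\xi_{i}^{(B)}>0$ for some $i\in\{1,3,4,5,6\}$, we obtain $2\pi=\xi^{(B)}\cdot\b(r_{0},s_{0})<\xi^{(B)}\cdot c=(\pi/6)\,\xi^{(B)}\cdot(6,2,2,3,3,2)$, which rearranges to the desired bound. The only step that is not a one-line cosine computation is the bound $\b_{5}<\pi/2$: expanding the numerator of $\cos\theta(r_{0},1,s_{0})$ reduces positivity to $r_{0}(1+r_{0})>s_{0}(1-r_{0})$, which holds because $s_{0}(1-r_{0})<r_{0}(1-r_{0})<r_{0}<r_{0}(1+r_{0})$ under $s_{0}<r_{0}<1$. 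This mildly non-routine inequality is the only genuine obstacle; everything else is a direct analogue of the argument in Proposition~\ref{prop:s-upper-lower-bounds}.
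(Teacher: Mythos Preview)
Your proof is correct and follows essentially the same approach as the paper: bound each $\b_i(r_0,s_0)$ above and below using $0<s_0<r_0<1$, then compare $\xi^{(B)}\cdot\b(r_0,s_0)=2\pi$ against the resulting weighted bounds. The paper states the required inequalities ($\b_1,\b_4>\pi/3$; $\b_3,\b_5,\b_6>0$; $\b_1<\pi$; $2\b_4,2\b_5<\pi$; $3\b_3,3\b_6<\pi$; $\b_2=\pi/3$) without computation and leaves the role of the hypothesis $\xi^{(B)}\cdot(1,0,1,1,1,1)\neq0$ in securing strictness implicit, whereas you spell out both the cosine-rule verifications and the case split---but this is elaboration, not a different argument.
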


\begin{proof}
Since $0<s_{0}<r_{0}<1$, we have $\b_{1}(r_{0},s_{0}),\,\b_{4}(r_{0},s_{0})>3^{-1}\pi$,
with 
\[
\b_{3}(r_{0},s_{0}),\,\b_{5}(r_{0},s_{0}),\,\b_{6}(r_{0},s_{0})>0
\]
 and $\b_{2}(r_{0},s_{0})=3^{-1}\pi$. Therefore, since $\xi^{(B)}\cdot\b(r_{0},s_{0})=2\pi$,
we obtain $\xi^{(B)}\cdot(1,1,0,1,0,0)<3\pi^{-1}(\xi^{(B)}\cdot\b(r_{0},s_{0}))=6.$

On the other hand, since $0<s_{0}<r_{0}<1$, we have 
\[
\b_{1}(r_{0},s_{0}),\,3\b_{3}(r_{0},s_{0}),\,2\b_{4}(r_{0},s_{0}),\,2\b_{5}(r_{0},s_{0}),\,3\b_{6}(r_{0},s_{0})<\pi
\]
and $\b_{2}(r_{0},s_{0})=3^{-1}\pi$. Therefore, since $\xi^{(B)}\cdot\b(r_{0},s_{0})=2\pi$,
we obtain $\xi^{(B)}\cdot(6,2,2,3,3,2)>6\pi^{-1}(\xi^{(B)}\cdot\b(r_{0},s_{0}))=12.$
\end{proof}
\begin{prop}
\label{prop:Few-large-neighbors}Let $(r_{0},s_{0})\in\Pi_{3}$ and
let $P$ be any compact $3$-packing with $\rad(P)=\{s_{0},r_{0},1\}$.
For every circle $B\in P$ with radius $r_{0}$, we have
\[
\xi^{(B)}\cdot(2,2,0,2,1,1)\leq12,
\]
with the inequality strict if $\xi^{(B)}\cdot(2,0,0,1,1,0)>0$.
\end{prop}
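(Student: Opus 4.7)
The plan is to mirror the proofs of Propositions~\ref{prop:s-upper-lower-bounds} and~\ref{prop:r-upper-lower-bounds}: combine the angle-sum identity $\xi^{(B)}\cdot \b(r_0,s_0) = 2\pi$ with coordinate-wise lower bounds on the entries of $\b(r_0,s_0)$ and then dot against $\xi^{(B)}$.

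Concretely, set $w := (2,2,0,2,1,1)$. I aim to show $\b_i(r_0,s_0) \geq \tfrac{\pi}{6} w_i$ for every $i\in\{1,\dots,6\}$, so that taking the dot product against the nonnegative integer vector $\xi^{(B)}$ yields
\[
2\pi \;=\; \xi^{(B)} \cdot \b(r_0,s_0) \;\geq\; \tfrac{\pi}{6}\, \xi^{(B)} \cdot (2,2,0,2,1,1),
\]
which rearranges to the desired $\xi^{(B)}\cdot(2,2,0,2,1,1) \leq 12$.

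For the componentwise bounds: the case $i=3$ is vacuous since $w_3=0$; $\b_2(r_0,s_0)=\pi/3$ is the exact identity; for $i\in\{1,4\}$ the strict bound $\b_i > \pi/3$ follows from the cosine formulas $\cos\b_1(r_0,s_0) = 1 - 2/(1+r_0)^2$ and $\cos\b_4(r_0,s_0) = r_0/(1+r_0)$, which are each strictly less than $1/2$ using $r_0 < 1$. The bounds $\b_5,\b_6 \geq \pi/6$ likewise reduce to explicit inequalities on cosine-rule expressions under $0<s_0<r_0<1$; for example, $\cos\b_6(r_0,s_0) = r_0/(r_0+s_0)$ must satisfy $r_0/(r_0+s_0) \leq \cos(\pi/6)$, and the analogous computation gives the corresponding bound for $\b_5$. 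I expect verifying these last two bounds to be the most delicate step, since unlike $\b_1,\b_2,\b_4$ they interact nontrivially with $s_0$.

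For the strict part, the hypothesis $\xi^{(B)}\cdot(2,0,0,1,1,0) > 0$ is equivalent to at least one of $\xi^{(B)}_1$, $\xi^{(B)}_4$, $\xi^{(B)}_5$ being strictly positive, as these are nonnegative integers. Whichever coordinate is positive, the corresponding componentwise bound $\b_i \geq \tfrac{\pi}{6} w_i$ is strict (since $\b_1 > \pi/3$, $\b_4 > \pi/3$, and $\b_5 > \pi/6$ under $0<s_0<r_0<1$), so the aggregated inequality becomes strict, producing $\xi^{(B)}\cdot(2,2,0,2,1,1) < 12$. The combination step is then immediate from linearity of the dot product, so the entire argument reduces to the coordinate-wise verifications. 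A useful geometric sanity check is that $\xi^{(B)}\cdot(2,2,0,2,1,1)$ counts precisely twice the number of neighbors of $B$ of radius in $\{r_0,1\}$, so the proposition asserts that any $r_0$-circle in the packing has at most six such ``large'' neighbors, strictly fewer if any of them has radius exactly $1$.
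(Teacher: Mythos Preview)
Your componentwise approach has a genuine gap: the bounds $\b_5(r_0,s_0) \geq \pi/6$ and $\b_6(r_0,s_0) \geq \pi/6$ are false for general $0<s_0<r_0<1$. You correctly compute $\cos\b_6(r_0,s_0) = r_0/(r_0+s_0)$, but this is at most $\cos(\pi/6)=\sqrt{3}/2$ only when $s_0/r_0 \geq 2/\sqrt{3}-1 \approx 0.155$; for smaller ratios (e.g.\ $r_0=1/2$, $s_0=1/20$) one gets $\b_6 < \pi/6$. Likewise $\b_5(r_0,s_0)\to 0$ as $s_0\to 0$ with $r_0$ fixed. Nothing available at this stage of the argument forces a lower bound on $s_0/r_0$, so the ``delicate step'' you anticipated cannot be carried out, and neither the main inequality nor the strictness claim follows from your setup.

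What does work is precisely your closing ``sanity check'', and this is exactly the paper's proof. Writing $N$ for the neighbor set of $B$, the identities $\xi^{(B)}\cdot(2,0,0,1,1,0)=2|\{C\in N:\radius(C)=1\}|$ and $\xi^{(B)}\cdot(0,2,0,1,0,1)=2|\{C\in N:\radius(C)=r_0\}|$ show that $\xi^{(B)}\cdot(2,2,0,2,1,1)$ is twice the number of large neighbors (those of radius in $\{r_0,1\}$). Ordering only the large neighbors cyclically around $B$, the angles at the center of $B$ between \emph{consecutive} large neighbors sum to $2\pi$, and each such angle is at least $\pi/3$ since two non-overlapping circles of radius $\geq r_0$ tangent to a circle of radius $r_0$ subtend an angle of at least $\pi/3$ at its center. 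Hence there are at most six large neighbors; if any has radius $1$ the corresponding angle exceeds $\pi/3$, giving strictly fewer than six. The key difference from your approach is that these are angles between consecutive \emph{large} neighbors---possibly with small $s_0$-circles in between absorbing arbitrarily much extra angle---rather than the individual entries $\b_i$ recorded by the angle-count.
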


\begin{proof}
Let $B$ be any circle in $P$ with radius $r_{0}$ and let $N$ be
the set of neighbors of $B$. Then 
\begin{align*}
\xi^{(B)}\cdot(2,0,0,1,1,0) & =2|\{C\in N|\radius(C)=1\}|,\\
\xi^{(B)}\cdot(0,2,0,1,0,1) & =2|\{C\in N|\radius(C)=r_{0}\}|.
\end{align*}
However, all the angles formed at the center of $B$ by connecting
the center of $B$ with the centers of circles from $\{C\in N|\radius(C)\in\{r_{0},1\}\}$
are greater or equal to $\pi/3$ and add up to $2\pi$. Therefore
$|\{C\in N|\radius(C)\in\{r_{0},1\}\}|\leq6$, and hence 
\begin{align*}
\xi^{(B)}\cdot(2,2,0,2,1,1) & =\xi^{(B)}\cdot(0,2,0,1,0,1)+\xi^{(B)}\cdot(2,0,0,1,1,0)\\
 & =2|\{C\in N|\radius(C)=r_{0}\}|+2|\{C\in N|\radius(C)=1\}|\\
 & =2|\{C\in N|\radius(C)\in\{r_{0},1\}\}|\\
 & \leq12.
\end{align*}

If $\xi^{(B)}\cdot(2,0,0,1,1,0)>0,$ then $\{C\in N|\radius(C)=1\}\neq\emptyset$.
Hence at least one of the angles formed at the center of $B$ by connecting
the center of $B$ with the centers of circles from $\{C\in N|\radius(C)\in\{r_{0},1\}\}$
is greater than $\pi/3$, with all of them still adding up to $2\pi$.
Therefore we must have $|\{C\in N|\radius(C)\in\{r_{0},1\}\}|<6$,
and the result follows. 
\end{proof}
By collecting the previous propositions into the following theorem,
we note, for every $(r_{0},s_{0})\in\Pi_{3}$ and compact $3$-packing
$P$ with $\rad(P)=\{s_{0},r_{0},1\}$, that there must necessarily
exist three circles in $P$ with respective radii $s_{0}$, $r_{0}$
and $1$, whose angle-counts satisfy the stated conditions.
\begin{thm}
\label{thm:necessary-conditions-on-r-s-tuples}Let $(r_{0},s_{0})\in\Pi_{3}$
and let $P$ be any compact $3$-packing with $\rad(P)=\{s_{0},r_{0},1\}$.
There exist circles $A$, $B$ and $C$ in $P$ with respective radii
$s_{0}$, $r_{0}$ and $1$ so that:
\begin{enumerate}
\item $\xi^{(A)}\cdot(1,0,0,1,1,0)\neq0$ or $\xi^{(B)}\cdot(1,0,0,1,1,0)\neq0$.
\item $\xi^{(A)}\cdot(1,1,0,1,1,1)\neq0$. 
\item $\xi^{(A)}\cdot(1,1,1,1,1,1)<6$ and $\xi^{(A)}\cdot(6,6,2,6,3,3)>12$.
\item $\xi^{(B)}\cdot(1,0,1,1,1,1)\neq0$.
\item $\xi^{(B)}\cdot(1,1,0,1,0,0)<6$ and $\xi^{(B)}\cdot(6,2,2,3,3,2)>12.$
\item $\xi^{(B)}\cdot(2,2,0,2,1,1)\leq12$.
\item If $\xi^{(B)}\cdot(2,0,0,1,1,0)>0,$ then $\xi^{(B)}\cdot(2,2,0,2,1,1)<12.$ 
\item $\xi^{(C)}\cdot(0,1,1,1,1,1)\neq0$ and $\xi^{(C)}\cdot(1,0,0,0,0,0)<6$.
\item For $D\in\{A,B,C\}$ we have 
\begin{align*}
\xi^{(D)}\cdot(2,0,0,1,1,0) & =0\mod2,\\
\xi^{(D)}\cdot(0,2,0,1,0,1) & =0\mod2,\\
\xi^{(D)}\cdot(0,0,2,0,1,1) & =0\mod2.
\end{align*}
\item For $D\in\{A,B,C\}$ there exists some $n\in\N$ and $\sigma\in\{s_{0},r_{0},1\}^{\{0,\ldots,n-1\}}$
with
\[
\xi^{(D)}=\sum_{j=0}^{n-1}\kappa_{\sigma(j),\sigma(j+1\mod n)}.
\]
\end{enumerate}
\end{thm}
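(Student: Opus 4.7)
The plan is to assemble the theorem from the preceding propositions, with the only care being to produce a single consistent choice of witness circles $A$, $B$, $C$ for which \emph{all} of the listed conditions hold simultaneously. Items (3), (5), (6), (7), and (9) apply to any circle of the appropriate radius (possibly under a hypothesis that will be implied by the remaining items), and so they follow immediately from Propositions~\ref{prop:s-upper-lower-bounds}, \ref{prop:r-upper-lower-bounds}, \ref{prop:Few-large-neighbors}, and~\ref{prop:mod-2-condition}. Item (10) is immediate from the construction of $\xi^{(D)}$ in Section~\ref{sec:Preliminaries}, and item (8) is given directly by Proposition~\ref{prop:not-in-center-of-hexagon}(3). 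The only nontrivial step is handling the disjunctive condition~(1) consistently with~(2) and~(4).

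To handle~(1), I would split on the two alternatives furnished by Proposition~\ref{prop:either-s-or-r-circle-must-touch-1-circle}: either there exists a radius-$s_{0}$ circle $A^{*}\in P$ with $\xi^{(A^{*})}\cdot(1,0,0,1,1,0)\neq 0$, or there exists a radius-$r_{0}$ circle $B^{*}\in P$ with $\xi^{(B^{*})}\cdot(1,0,0,1,1,0)\neq 0$. The key observation is that the vector $(1,0,0,1,1,0)\in\tuples$ is coordinate-wise dominated both by $(1,1,0,1,1,1)$ (the vector in Proposition~\ref{prop:not-in-center-of-hexagon}(1)) and by $(1,0,1,1,1,1)$ (the vector in Proposition~\ref{prop:not-in-center-of-hexagon}(2)). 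Consequently any such $A^{*}$ automatically satisfies $\xi^{(A^{*})}\cdot(1,1,0,1,1,1)\neq 0$, and any such $B^{*}$ automatically satisfies $\xi^{(B^{*})}\cdot(1,0,1,1,1,1)\neq 0$; these are precisely the hypotheses that activate Propositions~\ref{prop:s-upper-lower-bounds} and~\ref{prop:r-upper-lower-bounds}.

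In the first alternative I set $A:=A^{*}$, so that (1) and (2) hold through $A$, and (3) follows via Proposition~\ref{prop:s-upper-lower-bounds}. I then pick $B$ using Proposition~\ref{prop:not-in-center-of-hexagon}(2), which yields (4), and (5) follows from Proposition~\ref{prop:r-upper-lower-bounds}. In the second alternative I take $B:=B^{*}$ and proceed symmetrically, obtaining $A$ from Proposition~\ref{prop:not-in-center-of-hexagon}(1). In either case I choose $C$ via Proposition~\ref{prop:not-in-center-of-hexagon}(3) to obtain (8). With these choices fixed, the remaining items (6), (7) for $B$ follow from Proposition~\ref{prop:Few-large-neighbors}, and (9) for each of $A,B,C$ follows from Proposition~\ref{prop:mod-2-condition}.

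The main obstacle, such as it is, is purely the bookkeeping needed to see that the witnesses furnished by Proposition~\ref{prop:either-s-or-r-circle-must-touch-1-circle} can also serve as witnesses for the stronger conditions of Proposition~\ref{prop:not-in-center-of-hexagon}; this reduces to the coordinate-wise domination check highlighted above. No further analytic work is needed beyond invoking the preceding results.
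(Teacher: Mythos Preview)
Your proposal is correct and follows essentially the same approach as the paper. The only organizational difference is that the paper begins with the pair $(A,B)$ supplied by Proposition~\ref{prop:either-s-or-r-circle-must-touch-1-circle} and then \emph{redefines} whichever of $A$ or $B$ fails~(2) or~(4) using Proposition~\ref{prop:not-in-center-of-hexagon}, whereas you case-split on the disjunction first and then build the missing witness; both arguments hinge on the same coordinate-wise domination observation you highlight.
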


\begin{proof}
Proposition~\ref{prop:either-s-or-r-circle-must-touch-1-circle}
yields circles $A$ and $B$ in $P$ with respective radii $s_{0}$
and $r_{0}$ so that 
\[
\xi^{(A)}\cdot(1,0,0,1,1,0)\neq0\quad\text{or}\quad\xi^{(B)}\cdot(1,0,0,1,1,0)\neq0.
\]
If $\xi^{(A)}\cdot(1,1,0,1,1,1)=0$, then we also have $\xi^{(A)}\cdot(1,0,0,1,1,0)=0$.
Since the above disjunction is true, we must then have $\xi^{(B)}\cdot(1,0,0,1,1,0)\neq0$,
which implies $\xi^{(B)}\cdot(1,0,1,1,1,1)\neq0$. Proposition~\ref{prop:not-in-center-of-hexagon}
yields some $A'\in P$ of radius $s_{0}$, so that $\xi^{(A')}\cdot(1,1,0,1,1,1)\neq0$
and $\xi^{(A')}\cdot(0,0,1,0,0,0)<6$ and we redefine $A$ as $A'$.
We may employ a similar argument to redefine $B$ if $\xi^{(B)}\cdot(1,0,1,1,1,1)=0$.
This establishes (1), (2) and (4)

By Proposition~\ref{prop:not-in-center-of-hexagon} there exists
a circle $C\in P$ satisfying $\xi^{(C)}\cdot(0,1,1,1,1,1)\neq0$
and $\xi^{(C)}\cdot(1,0,0,0,0,0)<6$, establishing (8).

The remaining assertions (3), (5), (6), (7), (9) and (10) follow immediately
from Propositions~\ref{prop:mod-2-condition}\textendash \ref{prop:Few-large-neighbors}
and the definition of the angle-counts $\xi^{(A)}$, $\xi^{(B)}$
and $\xi^{(C)}$.
\end{proof}
Motivated by the previous result, we will define a number of predicates
on $\tuples$ which will hopefully improve readability of the subsequent
sections. These predicates are named in what is hoped to be a meaningful
manner (even if some of their meanings might only become apparent
in the next section). These predicates can easily be implemented on
a computer.
\begin{defn}
\label{def:predicates}Let $\eta,\zeta\in\tuples$ and let $(r_{0},s_{0})\in\Delta_{3}$
(here we regard $r_{0}$ and $s_{0}$ purely as distinct index symbols).
We define the following predicates: 
\begin{align*}
\seq{\eta} & :=\exists n\in\N,\ \exists\sigma\in\{s_{0},r_{0},1\}^{\{0,\ldots,n-1\}},\ \eta=\sum_{j=0}^{n-1}\kappa_{\sigma(j),\sigma(j+1\mod n)},\\
\modtwo{\eta} & :=(\eta\cdot(2,0,0,1,1,0)=0\mod2)\\
 & \phantom{:=}\quad\wedge(\eta\cdot(0,2,0,1,0,1)=0\mod2)\\
 & \phantom{:=}\quad\wedge(\eta\cdot(0,0,2,0,1,1)=0\mod2),
\end{align*}
\begin{align*}
\sbounds{\eta} & :=(\eta\cdot(1,1,1,1,1,1)<6)\wedge(\eta\cdot(6,6,2,6,3,3)>12),\\
\snonhex{\eta} & :=(\eta\cdot(1,1,0,1,1,1)\neq0),\\
\snec{\eta} & :=\sbounds{\eta}\wedge\snonhex{\eta}\wedge\seq{\eta}\wedge\modtwo{\eta},
\end{align*}
\begin{align*}
\rbounds{\zeta} & :=(\zeta\cdot(1,1,0,1,0,0)<6)\wedge(\zeta\cdot(6,2,2,3,3,2)>12),\\
 & \phantom{:=}\quad\wedge(\zeta\cdot(2,2,0,2,1,1)\leq12),\\
\rnonhex{\zeta} & :=(\zeta\cdot(1,0,1,1,1,1)\neq0),\\
\rfewlargeneighbors{\zeta} & :=(\zeta\cdot(2,0,0,1,1,0)>0)\Rightarrow(\zeta\cdot(2,2,0,2,1,1)<12),\\
\rnec{\zeta} & :=\rbounds{\zeta}\wedge\rnonhex{\zeta}\wedge\seq{\zeta}\wedge\modtwo{\zeta},\\
 & \phantom{:=}\quad\wedge\rfewlargeneighbors{\zeta}\\
\rverticalcont{\zeta} & :=\zeta\cdot(0,0,1,0,1,1)=0,
\end{align*}
\begin{align*}
\srdisjunct{\eta,\zeta} & :=(\eta\cdot(1,0,0,1,1,0)\neq0)\vee(\zeta\cdot(1,0,0,1,1,0)\neq0),
\end{align*}
\begin{align*}
\ononhex{\eta} & :=(\eta\cdot(0,1,1,1,1,1)\neq0)\wedge(\eta\cdot(1,0,0,0,0,0)<6),\\
\onec{\eta} & :=\ononhex{\eta}\wedge\seq{\eta}\wedge\modtwo{\eta}.
\end{align*}
\end{defn}

Now, a straightforward brute-force search by computer can establish
that the set $\set{\eta\in\tuples}{\snec{\eta}}$ is finite and has
exactly 55 elements, see Proposition~\ref{prop:55-s-angle-counts}
below. 

We note that the 3rd coordinate of elements from $\set{\xi\in\tuples}{\rnec{\xi}}$
is not bounded above, and hence this set may be infinite. The next
two sections will address this issue.
\begin{prop}
\label{prop:55-s-angle-counts}The set $\set{\eta\in\tuples}{\snec{\eta}}$
has exactly 55 elements, and its members are listed in Table~\ref{tab:s-nec-elements}.
\end{prop}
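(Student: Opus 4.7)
The plan is a finite exhaustion enabled by the linear inequality packaged inside $\sbounds{\eta}$. Since $\snec{\eta}$ implies $\eta\cdot(1,1,1,1,1,1)<6$, every admissible $\eta\in\tuples$ satisfies $\sum_{i=1}^{6}\eta_{i}\leq 5$, so the candidate space reduces to the finite collection of weak compositions of integers $0,\ldots,5$ into six non-negative parts, of which there are $\binom{11}{6}=462$. I would enumerate these and test the remaining predicates one at a time.

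The second inequality in $\sbounds{\eta}$, together with $\snonhex{\eta}$ and $\modtwo{\eta}$, amount to evaluating a few dot products and parities and are mechanical. The only non-routine predicate is $\seq{\eta}$. I would reformulate it as an Eulerian-circuit condition: read $\eta$ as the edge-multiplicity vector of a multigraph $G_{\eta}$ on the vertex set $\{s_{0},r_{0},1\}$, in which $\eta_{1},\eta_{2},\eta_{3}$ count self-loops at $1,r_{0},s_{0}$ respectively, and $\eta_{4},\eta_{5},\eta_{6}$ count edges between $\{1,r_{0}\}$, $\{1,s_{0}\}$, and $\{r_{0},s_{0}\}$ respectively. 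Then $\eta=\sum_{j=0}^{n-1}\kappa_{\sigma(j),\sigma(j+1\bmod n)}$ holds for some cyclic sequence $\sigma$ if and only if $G_{\eta}$ admits a closed Eulerian walk. By Euler's theorem this is equivalent to every vertex of $G_{\eta}$ having even degree---precisely the content of $\modtwo{\eta}$---together with the subgraph induced on vertices of positive degree being connected. Connectivity on a three-vertex graph is trivial to test.

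With this reformulation in hand, checking $\snec{\cdot}$ becomes a short loop: for each of the 462 candidates, evaluate six scalar inequalities, three parities, and one connectivity check on a graph with at most three vertices. Running this enumeration (by computer, or by patient hand-calculation) should yield exactly the 55 tuples displayed in Table~\ref{tab:s-nec-elements}. As a sanity check I would independently confirm that each of the 55 listed tuples admits an explicit closed walk realising it, and spot-check several of the 407 rejected candidates to see which specific predicate they violate. The main obstacle is not mathematical depth or computational scale but rather the bookkeeping involved in correctly implementing $\seq{\cdot}$; the Euler-circuit reformulation above collapses it to a textbook condition, so the risk of mis-encoding becomes minimal.
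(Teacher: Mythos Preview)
Your approach is correct and matches the paper's, which simply declares the result as the output of a brute-force computer search without further elaboration. Your Eulerian-circuit reformulation of $\seq{\eta}$---identifying $\modtwo{\eta}$ with the even-degree condition and reducing the remainder to a connectivity test on a three-vertex multigraph---is a genuine clarification that the paper does not spell out, and it makes the predicate transparently decidable rather than leaving it as an unexplained search over cyclic sequences.
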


\begin{table}[h]
\[
\begin{array}{ccccc}
(0,0,0,1,1,3) & (0,0,1,2,2,0) & (0,1,2,0,0,2) & (1,0,0,0,4,0) & (1,2,0,2,0,0)\\
(0,0,0,1,3,1) & (0,0,2,1,1,1) & (0,2,0,0,0,2) & (1,0,0,1,1,1) & (2,0,0,0,2,0)\\
(0,0,0,2,0,2) & (0,1,0,0,0,4) & (0,2,0,1,1,1) & (1,0,0,2,0,0) & (2,0,0,1,1,1)\\
(0,0,0,2,2,0) & (0,1,0,0,2,2) & (0,2,0,2,0,0) & (1,0,0,2,0,2) & (2,0,0,2,0,0)\\
(0,0,0,3,1,1) & (0,1,0,1,1,1) & (0,2,1,0,0,2) & (1,0,0,2,2,0) & (2,0,1,0,2,0)\\
(0,0,0,4,0,0) & (0,1,0,2,0,0) & (0,3,0,0,0,0) & (1,0,0,4,0,0) & (2,1,0,2,0,0)\\
(0,0,1,0,0,4) & (0,1,0,2,0,2) & (0,3,0,0,0,2) & (1,0,1,0,2,0) & (3,0,0,0,0,0)\\
(0,0,1,0,2,2) & (0,1,0,2,2,0) & (0,3,0,2,0,0) & (1,0,1,1,1,1) & (3,0,0,0,2,0)\\
(0,0,1,0,4,0) & (0,1,0,4,0,0) & (0,4,0,0,0,0) & (1,0,2,0,2,0) & (3,0,0,2,0,0)\\
(0,0,1,1,1,1) & (0,1,1,0,0,2) & (0,5,0,0,0,0) & (1,1,0,1,1,1) & (4,0,0,0,0,0)\\
(0,0,1,2,0,2) & (0,1,1,1,1,1) & (1,0,0,0,2,2) & (1,1,0,2,0,0) & (5,0,0,0,0,0).
\end{array}
\]
\caption{\label{tab:s-nec-elements}The 55 members of the set $\protect\set{\eta\in\protect\tuples}{\protect\snec{\eta}}$. }
\end{table}

\section{Contour analysis\label{sec:Contour-analysis}}

Theorem~\ref{thm:necessary-conditions-on-r-s-tuples} provides no
upper bound on the 3rd coordinate of angle-counts for midsize circles
in a compact $3$-packing. In this section, for arbitrary elements
$\eta\in\set{\xi\in\tuples}{\snec{\xi}}$ and $\zeta\in\set{\xi\in\tuples}{\rnec{\xi}}$,
we will analyze the properties of the $2\pi$-contours of the functions
\begin{align*}
 & \Delta_{3}\ni(r,s)\mapsto\eta\cdot\a(r,s),\\
 & \Delta_{3}\ni(r,s)\mapsto\zeta\cdot\b(r,s).
\end{align*}
The main goal in this section is establishing an upper bound for the
3rd coordinate of angle-counts for midsize circles in a compact $3$-packing,
through this contour analysis.

We begin with an analysis of the $2\pi$-contours of $\Delta_{3}\ni(r,s)\mapsto\eta\cdot\a(r,s)$
in Proposition~\ref{prop:s-contour-analysis}. A crucial part of
Proposition~\ref{prop:s-contour-analysis} is (\ref{enu:s-contour-global-bound}),
which explicitly describes a region in $\Delta_{3}$ containing the
$2\pi$-contours of $\Delta_{3}\ni(r,s)\mapsto\eta\cdot\a(r,s)$ for
all $\eta\in\set{\xi\in\tuples}{\snec{\xi}}$. 

Subsequently, in Proposition~\ref{prop:r-contour-analysis}(\ref{enu:r-contour-global-bound}),
we prove that if the 3rd coordinate of $\zeta\in\set{\xi\in\tuples}{\rnec{\xi}}$
is too large, then the $2\pi$-contour of $\Delta_{3}\ni(r,s)\mapsto\zeta\cdot\b(r,s)$
lies in a region that is disjoint from the region containing the $2\pi$-contours
of $\Delta_{3}\ni(r,s)\mapsto\eta\cdot\a(r,s)$ for all $\eta\in\set{\xi\in\tuples}{\snec{\xi}}$.
Therefore, these contours cannot intersect, while such an intersection
is a necessary condition for all angle-counts for circles in a compact
$3$-packing, as described in Section~\ref{sec:Preliminaries}. This
allows us to establish a bound on the 3rd coordinate of angle-counts
for midsize circles in a compact $3$-packing and lays the groundwork
for showing that $\Pi_{3}$ is finite in the next section.
\begin{prop}
\label{prop:s-contour-analysis}Let $\eta\in\set{\xi\in\tuples}{\snec{\xi}}$.
For any $m\in(0,1)$, we define the functions $f_{\eta}:\Delta_{3}\to\R$
and $g_{\eta,m}:(0,1)\to\R$ by 
\begin{align*}
f_{\eta}(r,s) & :=\eta\cdot\a(r,s)\quad(r,s)\in\Delta_{3}\\
g_{\eta,m}(r) & :=\eta\cdot\a(r,mr)\quad r\in(0,1).
\end{align*}
Then:
\begin{enumerate}
\item \itemsep7pt \label{enu:s-contour-non-empty-contour}There exists
some $(r_{0},s_{0})\in\Delta_{3}$ with $f_{\eta}(r_{0},s_{0})=2\pi.$
\item \label{enu:s-contour-g-monotone}For any $m\in(0,1)$ the function
$g_{\eta,m}$ is monotone decreasing and, if $\eta\cdot(1,0,0,1,1,0)\neq0$,
then $g_{\eta,m}$ is strictly decreasing. If $\eta\cdot(1,0,0,1,1,0)=0$
then $g_{\eta,m}$ is a constant function.
\item \label{enu:s-contour-s-partial-der}We have $(\partial_{2}f_{\eta})(r,s)<0$
for all $(r,s)\in\Delta_{3}$.
\item \label{enu:s-contour-implicit-function}There exists some $a\in[0,1)$
and a differentiable function $\phi:(a,1)\to(0,1)$ so that $f_{\eta}(r,\phi(r))=2\pi$
for all $r\in(a,1).$ We may choose $a\in[0,1)$ so that the graph
of $\phi$ equals the whole contour $\set{(r,s)\in\Delta_{3}}{f_{\eta}(r,s)=2\pi}.$
\item \label{enu:s-contour-intercept-with-phi}Let $(r_{0},s_{0})\in\Delta_{3}$
be any point satisfying $f_{\zeta}(r_{0},s_{0})=2\pi$ and let $\phi:(a,1)\to(0,1)$
be as yielded by (\ref{enu:s-contour-implicit-function}). With $m_{0}:=s_{0}/r_{0}$,
we have \smallskip
\begin{enumerate}
\item \itemsep7pt If $\eta\cdot(1,0,0,1,1,0)\neq0$, then $\phi(r)>m_{0}r$
for $r\in(a,r_{0})$ and $\phi(r)<m_{0}r$ for $r\in(r_{0},1)$. 
\item If $\eta\cdot(1,0,0,1,1,0)=0$, then $\phi(r)=m_{0}r$ for $r\in(0,1)$. 
\end{enumerate}
\item \label{enu:s-contour-global-bound} We have $f_{\eta}(r,s)>2\pi$
for all $(r,s)\in\Delta_{3}$ satisfying $s\leq10^{-1}r$.
\end{enumerate}
\end{prop}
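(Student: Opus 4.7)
The plan is to reduce the claim, via parts (\ref{enu:s-contour-g-monotone}) and (\ref{enu:s-contour-s-partial-der}), to the endpoint inequality $\lim_{r\to 1^-}g_{\eta,1/10}(r)>2\pi$, and then to verify this by direct enumeration over the 55 tuples of Table~\ref{tab:s-nec-elements}.

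By (\ref{enu:s-contour-s-partial-der}), $f_\eta(r,\cdot)$ is strictly decreasing, so for $(r,s)\in\Delta_3$ with $s\leq r/10$ one has $f_\eta(r,s)\geq g_{\eta,1/10}(r)$, strictly when $s<r/10$. By (\ref{enu:s-contour-g-monotone}), $g_{\eta,1/10}$ is monotone decreasing on $(0,1)$: strictly so when $\eta\cdot(1,0,0,1,1,0)\neq 0$, and constant otherwise. Since the angle functions extend continuously to $r=1$, the limit
\[
L(\eta):=\lim_{r\to 1^-}g_{\eta,1/10}(r)=A(\eta_1+\eta_2+\eta_4)+\tfrac{\pi}{3}\eta_3+B(\eta_5+\eta_6)
\]
exists and bounds $g_{\eta,1/10}(r)$ from below on $(0,1)$, where $A:=\arccos(-79/121)$ and $B:=\arccos(1/11)$ are the common limit values of $\a_1=\a_2=\a_4$ and of $\a_5=\a_6$ at $(1,1/10)$ (and $\a_3=\pi/3$).

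Chaining the two estimates yields $f_\eta(r,s)\geq g_{\eta,1/10}(r)\geq L(\eta)$. The only configuration in which both inequalities are equalities is when simultaneously $s=r/10$ and $g_{\eta,1/10}$ is constant (i.e.\ $\eta\cdot(1,0,0,1,1,0)=0$), in which case $f_\eta(r,s)=L(\eta)$ and one needs $L(\eta)>2\pi$ strictly; in every other configuration at least one of the two inequalities is strict, so $L(\eta)\geq 2\pi$ suffices. In either case, verifying $L(\eta)>2\pi$ for every $\eta$ in Table~\ref{tab:s-nec-elements} closes the proof.

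Direct evaluation shows that exactly three tuples attain the minimum $L(\eta)=A+\pi/3+2B$, namely $(0,0,1,1,1,1)$, $(0,1,1,0,0,2)$ and $(1,0,1,0,2,0)$. Rewriting $A=\pi-\arccos(79/121)$ and $B=\pi/2-\arcsin(1/11)$, the inequality $L(\eta)>2\pi$ becomes $\arccos(79/121)+2\arcsin(1/11)<\pi/3$; expanding $\cos\bigl(\pi/3-2\arcsin(1/11)\bigr)$ via the cosine addition formula reduces this to $12\sqrt{10}<39$, i.e.\ $1440<1521$, a trivial rational inequality. The remaining 52 tuples give $L(\eta)\in\{3A,\ \pi/3+4B,\ 2A+2B,\ldots\}$, the smallest of which is $3A\approx 6.84$, all exceeding $2\pi\approx 6.283$ with considerable margin. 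The main obstacle is therefore the roughly $0.006$-radian margin at the three extremal tuples, which demands an algebraic rather than floating-point verification for full rigour; the rearrangement above handles exactly this worst case.
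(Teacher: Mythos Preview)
Your argument for part~(\ref{enu:s-contour-global-bound}) is correct and follows the same reduction as the paper: use (\ref{enu:s-contour-s-partial-der}) and (\ref{enu:s-contour-g-monotone}) to reduce to the endpoint inequality $\lim_{r\to1^-}g_{\eta,1/10}(r)>2\pi$, then check all $55$ tuples. The paper simply asserts this endpoint inequality ``can be verified (by computer)'', whereas you go further and make the verification explicit---identifying the three extremal tuples and reducing the tightest case to the rational inequality $12\sqrt{10}<39$---which is a genuine improvement in rigour over the paper's treatment of this step.
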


\begin{figure}[t]
\includegraphics[width=1\textwidth]{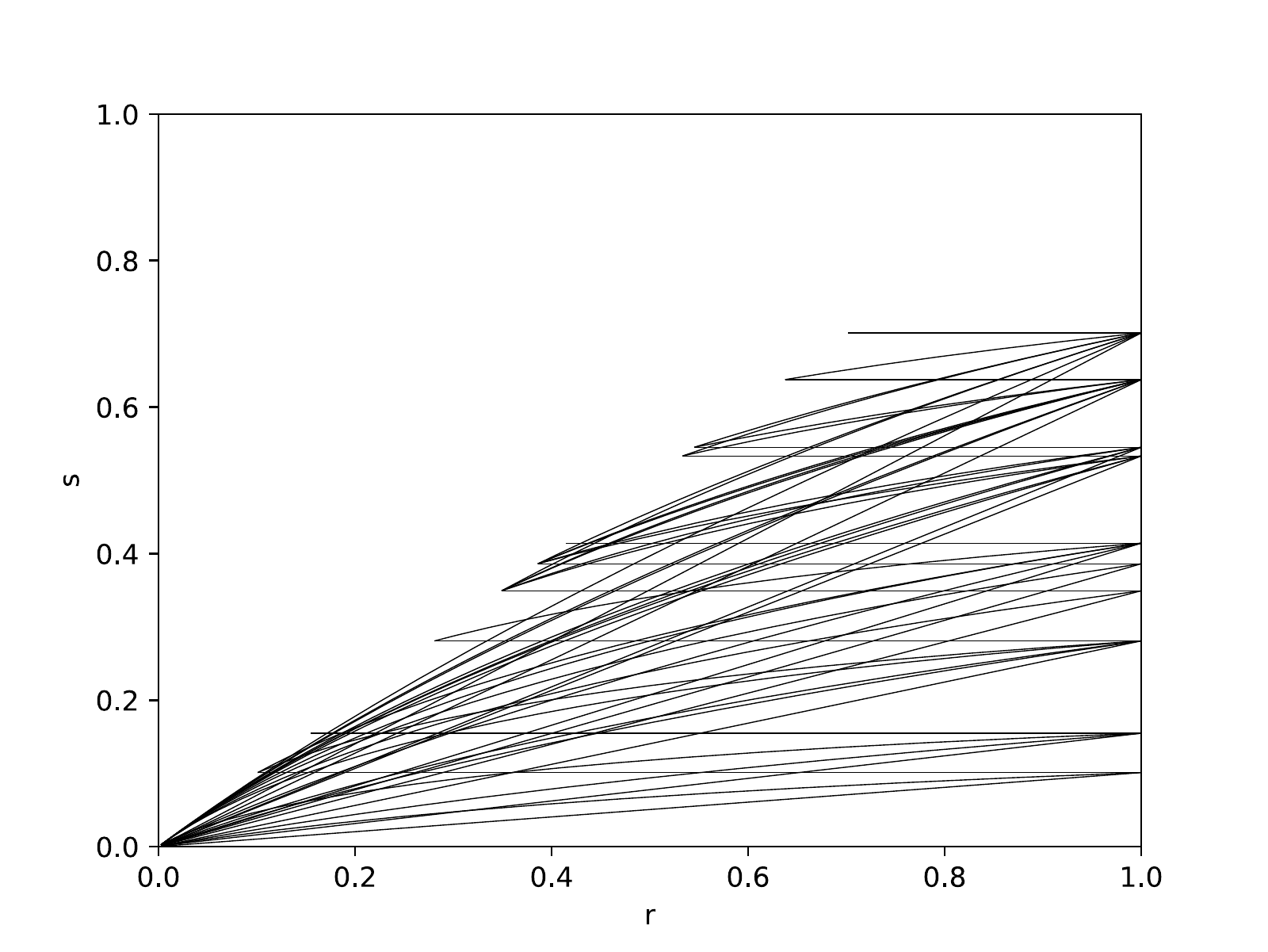}\caption{\label{fig:s-contours}The contours $\protect\set{(r,s)\in\Delta_{3}}{\eta\cdot\protect\a(r,s)=2\pi}$
for the 55 elements $\eta\in\protect\set{\xi\in\protect\tuples}{\protect\snec{\xi}}$. }
\end{figure}

\begin{proof}
We prove (\ref{enu:s-contour-non-empty-contour}). On $\set{(r,s)}{0\leq s\leq r\leq1}$,
each of the functions $(r,s)\mapsto\a_{i}(r,s)$ for $i\in\{1,\ldots,6\}$
attains its minimum and maximum respectively at $(1,1)$ and $(1,0)$.
Since $\snec{\eta}$ is true, we have $\eta\cdot(1,1,1,1,1,1)<6$
and $\eta\cdot(6,6,2,6,3,3)>12$, and therefore there exists some
$(r_{1},s_{1})\in\Delta_{3}$ (close to $(1,1)$) with $f_{\eta}(r_{1},s_{1})<2\pi$
and there exists some $(r_{2},s_{2})\in\Delta_{3}$ (close to $(1,0)$)
with $f_{\eta}(r_{2},s_{2})>2\pi$. By the Intermediate Value Theorem,
there exists some $(r_{0},s_{0})\in\Delta_{3}$ with $f_{\eta}(r_{0},s_{0})=2\pi$.

The assertion (\ref{enu:s-contour-g-monotone}) can be verified by
using a computer algebra system. Explicitly, for $m\in(0,1)$, where,
for $r\in(0,1)$,
\begin{align*}
U(r) & :=\frac{2m}{(mr+1)\sqrt{mr(mr+2)}},\\
V(r) & :=\frac{m}{(mr+1)\sqrt{m(mr+r+1)}},\\
W(r) & :=\frac{m}{(mr+1)\sqrt{2mr+1}},
\end{align*}
the function $g_{\eta,m}$ has derivative
\[
g_{\eta,m}'(r)=-\eta\cdot(U(r),0,0,V(r),W(r),0)\quad(r\in(0,1)).
\]
Furthermore, this derivative is seen to be everywhere non-positive,
and everywhere strictly negative if $\eta\cdot(1,0,0,1,1,0)\neq0$
and zero when $\eta\cdot(1,0,0,1,1,0)=0$, establishing (\ref{enu:s-contour-g-monotone}). 

We prove (\ref{enu:s-contour-s-partial-der}). For $i\in\{1,2,3,4,5,6\}$,
the functions $\Delta_{3}\ni(r,s)\mapsto\a_{i}(r,s)$ all have non-positive
(strictly negative for $i\in\{1,2,4,5,6\}$) partial derivatives with
respect to the second parameter everywhere on $\Delta_{3}$. Since
$\snonhex{\eta}$ is true, we have that$(\partial_{2}f_{\eta})(r,s)<0$
for all $(r,s)\in\Delta_{3}$, establishing (\ref{enu:s-contour-s-partial-der}).

We prove (\ref{enu:s-contour-implicit-function}). We define $G:=\set{r\in(0,1)}{\exists s\in(0,r),\ f_{\eta}(r,s)=2\pi},$
which is non-empty by (\ref{enu:s-contour-non-empty-contour}). By
(\ref{enu:s-contour-s-partial-der}), for every $r\in G$, there exists
a unique $\phi(r)\in(0,r)$ satisfying $f_{\eta}(r,\phi(r))=2\pi$.
It is clear that the graph of $\phi$ equals the contour $\set{(r,s)\in\Delta_{3}}{f_{\eta}(r,s)=2\pi}.$
By (\ref{enu:s-contour-s-partial-der}) and the Implicit Function
Theorem, the set $G$ is open and $\phi:G\to(0,1)$ is differentiable.
Since $(\partial_{1}f_{\eta})(r,s)\geq0$ for all $(r,s)\in\Delta_{3}$,
the set $G$ is connected and hence is an open interval $(a,b)$.
Furthermore, it can be verified (by computer) that we may always choose
$b=1$ (See Figure~\ref{fig:s-contours}).

We prove (\ref{enu:s-contour-intercept-with-phi}). If $\eta\cdot(1,0,0,1,1,0)\neq0$,
by (\ref{enu:s-contour-g-monotone}), the function $g_{\eta,m_{0}}$
is strictly decreasing. Therefore $f_{\eta}(r,rm_{0})=g_{\eta,m_{0}}(r)>2\pi$
for all $r\in(a,r_{0})$ and $f_{\eta}(r,rm_{0})=g_{\eta,m_{0}}(r)<2\pi$
for all $r\in(r_{0},1)$. But, by (\ref{enu:s-contour-s-partial-der}),
we have $(\partial_{2}f_{\eta})(r,s)<0$ for all $(r,s)\in\Delta_{3}$,
so that we must have $\phi(r)>m_{0}r$ for $r\in(a,r_{0})$, and $\phi(r)<m_{0}r$
for $r\in(r_{0},1)$. If $\eta\cdot(1,0,0,1,1,0)=0$, by (\ref{enu:s-contour-g-monotone}),
the function $g_{\eta,m_{0}}$ constant, and since $g_{\eta,m_{0}}(r_{0})=2\pi$,
we have that $\phi$ must equal the function $(0,1)\ni r\mapsto m_{0}r$. 

We prove (\ref{enu:s-contour-global-bound}). Figure~\ref{fig:s-contours}
may be a helpful visual aid. It can be verified (by computer) that
$\lim_{r\to1}f_{\eta}\parenth{r,10^{-1}r}>2\pi$. Then, by (\ref{enu:s-contour-g-monotone}),
we obtain $f_{\eta}\parenth{r,10^{-1}r}>2\pi$ for all $r\in(0,1)$.
But, by (\ref{enu:s-contour-s-partial-der}), we have $(\partial_{2}f_{\eta})(r,s)<0$
for all $(r,s)\in\Delta_{3}$, so that $f_{\eta}(r,s)>2\pi$ for all
$(r,s)\in\Delta_{3}$ satisfying $s\leq10^{-1}r$, establishing (\ref{enu:s-contour-global-bound}). 
\end{proof}
\begin{prop}
\label{prop:r-contour-analysis}Let $\zeta\in\set{\xi\in\tuples}{\rnec{\xi}}$
be arbitrary. For any $m\in(0,1)$, we define the functions $f_{\zeta}:F\to\R$
and $g_{\zeta,m}:(0,1)\to\R$ by 
\begin{align*}
f_{\zeta}(r,s) & :=\zeta\cdot\b(r,s)\quad(r,s)\in\Delta_{3}\\
g_{\zeta,m}(r) & :=\zeta\cdot\b(r,mr)\quad r\in(0,1).
\end{align*}
Then:
\begin{enumerate}
\item \itemsep7pt \label{enu:r-contour-non-empty-contour}There exists
some $(r_{0},s_{0})\in\Delta_{3}$ with $f_{\zeta}(r_{0},s_{0})=2\pi.$
\item \label{enu:r-contour-g-monotone}For any $m\in(0,1)$, the function
$g_{\zeta,m}$ is monotone decreasing and, if $\zeta\cdot(1,0,0,1,1,0)\neq0$,
then $g_{\zeta,m}$ is strictly decreasing. If $\zeta\cdot(1,0,0,1,1,0)=0$,
then $g_{\zeta,m}$ is a constant function.
\item \label{enu:r-contour-s-partial-der}If $\rverticalcont{\zeta}$ is
false, then $(\partial_{2}f_{\zeta})(r,s)>0$ for all $(r,s)\in\Delta_{3}$.
\item \label{enu:r-contour-vertical-contours}If $\rverticalcont{\zeta}$
is true, then there exists $r_{0}\in(0,1)$ for which $f_{\zeta}(r_{0},s)=2\pi$
for all $s\in(0,r_{0})$.
\item \label{enu:r-contour-implicit-function}If $\rverticalcont{\zeta}$
is false, then there exists some differentiable function $\psi:(c,d)\to(0,1)$
so that $f_{\zeta}(r,\psi(r))=2\pi$ for all $r\in(c,d).$ We may
choose the interval $(c,d)$ so that the graph of $\psi$ equals the
entire contour $\set{(r,s)\in\Delta_{3}}{f_{\zeta}(r,s)=2\pi}.$
\item \label{enu:r-contour-intercept-with-phi}Let $\rverticalcont{\zeta}$
be false and $(r_{0},s_{0})\in\Delta_{3}$ be any point satisfying
$f_{\zeta}(r_{0},s_{0})=2\pi$. With $\psi:(c,d)\to\R$ as yielded
by (\ref{enu:r-contour-implicit-function}) and $m_{0}:=s_{0}/r_{0}$,
\smallskip
\begin{enumerate}
\item \itemsep7pt If $\zeta\cdot(1,0,0,1,1,0)\neq0$, then $\psi(r)<m_{0}r$
for $r\in(c,r_{0})$ and $\psi(r)>m_{0}r$ for $r\in(r_{0},d)$. 
\item If $\zeta\cdot(1,0,0,1,1,0)=0$, then $\psi(r)=m_{0}r$ for all $r\in(0,1)$.
\end{enumerate}
\item \label{enu:r-contour-global-bound}If $\zeta_{3}\geq35$, then $f_{\zeta}(r,s)>2\pi$
for all $(r,s)\in\Delta_{3}$ satisfying $s\geq10^{-1}r.$
\end{enumerate}
\end{prop}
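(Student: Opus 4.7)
The overall strategy is to mirror Proposition~\ref{prop:s-contour-analysis} with two structural changes dictated by the geometry at the midsize circle: the coordinates of $\b$ that depend on $s$ now have strictly positive (rather than strictly negative) $s$-derivatives, and the $2\pi$-contour of $f_\zeta$ may degenerate to a vertical line, precisely when $\rverticalcont{\zeta}$ holds. With this in mind, I would prove (\ref{enu:r-contour-non-empty-contour})--(\ref{enu:r-contour-intercept-with-phi}) as the $r$-analogs of the corresponding parts of Proposition~\ref{prop:s-contour-analysis}, and treat (\ref{enu:r-contour-global-bound}) as the genuinely new ingredient.

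For (\ref{enu:r-contour-non-empty-contour}), evaluate $f_\zeta$ near $(r,s)=(1,1)$, where every $\b_i\to\pi/3$, and near a corner where $\b_1$ and $\b_4$ tend to $\pi$ (take $r\to 0$); the bounds in $\rbounds{\zeta}$ together with $\rfewlargeneighbors{\zeta}$ are arranged so that these two extremes straddle $2\pi$, and the Intermediate Value Theorem then produces a contour point. For (\ref{enu:r-contour-g-monotone}), observe that $\b_2(r,mr)$, $\b_3(r,mr)$ and $\b_6(r,mr)$ are scale-invariant, so the chain rule isolates contributions from coordinates $1$, $4$ and $5$ of $\zeta$; a direct symbolic check (verifiable in a computer algebra system) shows each contribution is non-positive, with strict negativity precisely when $\zeta\cdot(1,0,0,1,1,0)\neq 0$. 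For (\ref{enu:r-contour-s-partial-der}), note that only $\b_3, \b_5, \b_6$ depend on $s$ and each has strictly positive $s$-derivative, so $\partial_2 f_\zeta$ is a non-negative combination of strictly positive functions weighted by $\zeta\cdot(0,0,1,0,1,1)$, which is itself strictly positive exactly when $\rverticalcont{\zeta}$ is false. Part (\ref{enu:r-contour-vertical-contours}) is then immediate from (\ref{enu:r-contour-non-empty-contour}) together with the fact that under $\rverticalcont{\zeta}$ the function $f_\zeta$ depends only on $r$. For (\ref{enu:r-contour-implicit-function}), apply the Implicit Function Theorem to $f_\zeta-2\pi$ using (\ref{enu:r-contour-s-partial-der}), and verify computationally, as in Proposition~\ref{prop:s-contour-analysis}(\ref{enu:s-contour-implicit-function}), that the natural interval $(c,d)$ of definition exhausts the entire contour. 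For (\ref{enu:r-contour-intercept-with-phi}), evaluate $g_{\zeta,m_0}$ at $r_0$ and combine the monotonicity from (\ref{enu:r-contour-g-monotone}) with the sign of $\partial_2 f_\zeta$; the conclusions are reversed relative to Proposition~\ref{prop:s-contour-analysis}(\ref{enu:s-contour-intercept-with-phi}) precisely because $\partial_2 f_\zeta > 0$ here.

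The main obstacle is (\ref{enu:r-contour-global-bound}). Note first that $\zeta_3 \geq 35$ forces $\rverticalcont{\zeta}$ to be false, so (\ref{enu:r-contour-s-partial-der}) is available. Because $\b_3(r,mr) = \petalangle{1}{m}{m}$ is scale-invariant, the value $c_0 := \b_3(r, r/10)$ is a positive constant independent of $r$, and a direct, exact computation (verifiable in a computer algebra system) establishes the sharp numerical inequality $35\, c_0 > 2\pi$. Together with the non-negativity of the remaining $\zeta_i\b_i$ terms, this yields
\[
f_\zeta\parenth{r, \tfrac{r}{10}} \;\geq\; \zeta_3\, c_0 \;\geq\; 35\, c_0 \;>\; 2\pi \qquad (r\in(0,1)).
\]
Part (\ref{enu:r-contour-s-partial-der}) then propagates this inequality to every $(r,s)\in\Delta_3$ with $s\geq r/10$, finishing (\ref{enu:r-contour-global-bound}). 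The critical quantitative step --- and the reason the specific threshold $35$ appears in the statement --- is the sharp inequality $35\cdot\petalangle{1}{1/10}{1/10} > 2\pi$.
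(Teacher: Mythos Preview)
Your treatment of parts (\ref{enu:r-contour-g-monotone})--(\ref{enu:r-contour-global-bound}) is correct and matches the paper's proof essentially verbatim; in particular your handling of (\ref{enu:r-contour-global-bound}) via the scale-invariance $\b_3(r,r/10)=\petalangle{1}{1/10}{1/10}$ is exactly the paper's argument, stated slightly more explicitly.

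However, your argument for (\ref{enu:r-contour-non-empty-contour}) has a genuine error. You propose to obtain a point with $f_\zeta<2\pi$ near $(r,s)=(1,1)$, where all $\b_i\to\pi/3$. But near that corner $f_\zeta\approx\tfrac{\pi}{3}\,\zeta\cdot(1,1,1,1,1,1)$, and nothing in $\rnec{\zeta}$ bounds this sum; only $\zeta\cdot(1,1,0,1,0,0)<6$ is available, while $\zeta_3,\zeta_5,\zeta_6$ are unconstrained from above. Concretely, $\zeta=(0,0,10,0,0,2)$ satisfies $\rnec{\zeta}$ yet gives $f_\zeta\approx 4\pi$ near $(1,1)$. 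The correct corner for the $<2\pi$ side is $(1,0)$: there $\b(1,0)=(\pi/3,\pi/3,0,\pi/3,0,0)$, so $f_\zeta(1,0)=\tfrac{\pi}{3}\,\zeta\cdot(1,1,0,1,0,0)<2\pi$ by $\rbounds{\zeta}$. (Also, your claim that $\b_4\to\pi$ as $r\to0$ is false: $\b_4(r,s)=\arccos\!\bigl(r/(r+1)\bigr)\to\pi/2$.) For the $>2\pi$ side one approaches $(0,0)$ and uses $\zeta\cdot(6,2,2,3,3,2)>12$, which encodes the suprema $\pi,\pi/3,\pi/3,\pi/2,\pi/2,\pi/3$ of the $\b_i$. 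The predicate $\rfewlargeneighbors{\zeta}$ plays no role here.
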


\begin{proof}
We prove (\ref{enu:r-contour-non-empty-contour}). On $\set{(r,s)}{0\leq s\leq r\leq1}$,
each of the functions $(r,s)\mapsto\b_{i}(r,s)$ for $i\in\{1,\ldots,6\}$
attains its minimum and maximum respectively at $(1,0)$ and $(0,0)$
(or approached near $(0,0)$, if the function is not defined at $(0,0)$).
Since $\rnec{\zeta}$ is true, we have $\zeta\cdot(1,1,0,1,0,0)<6$
and $\zeta\cdot(6,2,2,3,3,2)>12$, and hence there exists some $(r_{1},s_{1})\in\Delta_{3}$
(close to $(1,0)$) with $f_{\zeta}(r_{1},s_{1})<2\pi$ and there
exists some $(r_{2},s_{2})\in\Delta_{3}$ (close to $(0,0)$) with
$f_{\zeta}(r_{2},s_{2})>2\pi$. By the Intermediate Value Theorem,
there exists some $(r_{0},s_{0})\in\Delta_{3}$ with $f_{\zeta}(r_{0},s_{0})=2\pi$.

The assertion (\ref{enu:r-contour-g-monotone}) can be verified with
a computer algebra system. Explicitly, for $r\in(0,1)$ and $m\in(0,1)$,
defining 
\begin{align*}
U(r) & :=\frac{2}{(r+1)\sqrt{r(r+2)}},\\
V(r) & :=\frac{1}{(r+1)\sqrt{2r+1}},\\
W(r) & :=\frac{m}{(r+1)\sqrt{m(mr+r+1)}},
\end{align*}
the function $g_{\zeta,m}$ has derivative
\[
g_{\zeta,m}'(r)=-\zeta\cdot(U(r),0,0,V(r),W(r),0)\quad(r\in(0,1)),
\]
which is easily seen to be non-positive, and strictly negative if
$\zeta\cdot(1,0,0,1,1,0)\neq0$, and zero when $\zeta\cdot(1,0,0,1,1,0)=0$. 

We prove (\ref{enu:r-contour-s-partial-der}). For $i\in\{1,2,3,4,5,6\}$,
the functions $\Delta_{3}\ni(r,s)\mapsto\b_{i}(r,s)$ all have non-negative
(strictly positive for $i\in\{3,5,6\}$) partial derivatives with
respect to the second parameter everywhere on $\Delta_{3}$.  Therefore,
if $\zeta\cdot(0,0,1,0,1,1)\neq0$, then $(\partial_{2}f_{\zeta})(r,s)>0$
for all $(r,s)\in\Delta_{3}$, establishing (\ref{enu:r-contour-s-partial-der}).

The assertion (\ref{enu:r-contour-vertical-contours}) follows from
(\ref{enu:r-contour-non-empty-contour}) when we notice that, for
$i\in\{1,2,4\}$, the functions $\Delta_{3}\ni(r,s)\mapsto\b_{i}(r,s)$
are all independent of the second parameter $s$.

We prove (\ref{enu:r-contour-implicit-function}). Define $G:=\set{r\in(0,1)}{\exists s\in(0,r),\ f_{\zeta}(r,s)=2\pi},$
which is non-empty by (\ref{enu:r-contour-non-empty-contour}). By
(\ref{enu:r-contour-s-partial-der}), for every $r\in G$, there exists
a unique $\psi(r)\in(0,r)$ such that $f_{\zeta}(r,\psi(r))=2\pi$.
It is clear that graph of the function $\psi:G\to(0,1)$ equals the
contour $\set{(r,s)\in\Delta_{3}}{f_{\zeta}(r,s)=2\pi}.$ By (\ref{enu:r-contour-s-partial-der}),
and the Implicit Function Theorem, $G$ is open and the function $\psi:G\to(0,1)$
is differentiable. Since $(\partial_{1}f_{\zeta})(r,s)\leq0$ for
all $(r,s)\in\Delta_{3}$, the set $G$ is connected, and hence must
be some open interval $(c,d)$.

We prove (\ref{enu:r-contour-intercept-with-phi}). If $\eta\cdot(1,0,0,1,1,0)\neq0$,
by (\ref{enu:r-contour-g-monotone}), the function $g_{\zeta,m_{0}}$
is strictly decreasing, and hence $f_{\zeta}(r,rm_{0})=g_{\zeta,m_{0}}(r)>2\pi$
for $r\in(c,r_{0})$ and $f_{\zeta}(r,rm_{0})=g_{\zeta,m_{0}}(r)<2\pi$
for $r\in(r_{0},d)$. By (\ref{enu:r-contour-s-partial-der}), we
have $(\partial_{2}f_{\zeta})(r,s)>0$ for all $(r,s)\in\Delta_{3}$,
and therefore $\psi(r)<m_{0}r$ for $r\in(c,r_{0})$ and $\psi(r)>m_{0}r$
for $r\in(r_{0},d)$. On the other hand, if $\eta\cdot(1,0,0,1,1,0)=0,$
the function $g_{\zeta,m_{0}}$ is constant, and since $g_{\zeta,m_{0}}(r_{0})=2\pi$,
we have that $\psi$ equals $(0,1)\ni r\mapsto m_{0}r$.

We prove (\ref{enu:r-contour-global-bound}). We assume that $\zeta_{3}\geq35$.
A straightforward computation shows that, for all $r\in(0,1)$, we
have $35\b_{3}(r,10^{-1}r)>2\pi.$ Therefore $f_{\zeta}(r,10^{-1}r)=\zeta\cdot\b(r,10^{-1}r)>2\pi$
for all $r\in(0,1)$. Since $\zeta_{3}\geq35$, we have $\zeta\cdot(0,0,1,0,1,1)\neq0$,
so that, by (\ref{enu:r-contour-s-partial-der}), we have $(\partial_{2}f_{\zeta})(r,s)>0$
for all $(r,s)\in\Delta_{3}$. Hence $f_{\zeta}(r,s)>2\pi$ for all
$(r,s)\in\Delta_{3}$ with $s\geq10^{-1}r.$
\end{proof}

\section{\label{sec:Pi_3 is finite}The set $\Pi_{3}$ is finite}

We are now in a position to prove one of our main results, Theorem~\ref{thm:set-radii-admitting-3packings-are-finite},
in this section. We begin by defining the following predicate:
\begin{defn}
For $\zeta\in\tuples$ we define the predicate
\begin{align*}
\rboundsextra{\zeta} & :=(\zeta\cdot(0,0,1,0,0,0)<35).
\end{align*}
\end{defn}

We define the set 
\[
K:=\set{(\eta,\zeta)\in\tuples^{2}}{\begin{array}{l}
\phantom{\wedge}\snec{\eta}\\
\wedge\rnec{\zeta}\\
\wedge\rboundsextra{\zeta}\\
\wedge\srdisjunct{\eta,\zeta}
\end{array}}.
\]

We will argue in this section that $|K|<\infty$ and that $|\Pi_{3}|\leq|K|$. 

A straightforward computer search will establish the cardinality of
$K$. All elements of $K$ are provided in the attached dataset.
\begin{prop}
\label{prop:tuples-satisfying-necessary-conditions-are-finite}The
set $K$ is finite and has exactly $248395$ elements.
\end{prop}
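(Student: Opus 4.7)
The plan is to bound the coordinates of both $\eta$ and $\zeta$ using the predicates defining $K$, which reduces the problem to a finite exhaustive search that can be carried out by computer.

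For the first component, Proposition~\ref{prop:55-s-angle-counts} already does all the heavy lifting: the predicate $\snec{\eta}$ restricts $\eta$ to the explicit 55-element set tabulated in Table~\ref{tab:s-nec-elements}, so the $\eta$-coordinate of any $(\eta,\zeta) \in K$ lies in a set of size 55. All that remains is to show $\zeta$ ranges over a finite set.

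For the second component, I would extract coordinate-wise bounds on $\zeta$ from $\rnec{\zeta} \wedge \rboundsextra{\zeta}$. Unpacking $\rbounds{\zeta}$, the inequality $\zeta\cdot(1,1,0,1,0,0)<6$ bounds $\zeta_1,\zeta_2,\zeta_4$ each by $5$, and $\zeta\cdot(2,2,0,2,1,1)\leq12$ bounds $\zeta_5,\zeta_6$ each by $12$. The only coordinate left unbounded by $\rbounds$ is $\zeta_3$, and this is exactly the role of $\rboundsextra{\zeta}$, which stipulates $\zeta_3<35$. (Recall from Section~\ref{sec:Contour-analysis}, specifically Proposition~\ref{prop:r-contour-analysis}(\ref{enu:r-contour-global-bound}) combined with Proposition~\ref{prop:s-contour-analysis}(\ref{enu:s-contour-global-bound}), that any $\zeta$ violating $\zeta_3<35$ would yield a $2\pi$-contour of $(r,s)\mapsto\zeta\cdot\b(r,s)$ disjoint from every $2\pi$-contour of $(r,s)\mapsto\eta\cdot\a(r,s)$ with $\snec{\eta}$, so this predicate is the right one to impose.) Consequently every $\zeta$ with $\rnec{\zeta}\wedge\rboundsextra{\zeta}$ lies in the finite box $\{0,\ldots,5\}^3\times\{0,\ldots,34\}\times\{0,\ldots,12\}^2$ (up to a coordinate reordering), giving $|K|<\infty$.

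For the exact value $|K|=248395$, I would simply enumerate all pairs $(\eta,\zeta)$ in the explicit finite product described above and filter by the four predicates $\snec{\eta}$, $\rnec{\zeta}$, $\rboundsextra{\zeta}$, $\srdisjunct{\eta,\zeta}$, each of which is trivial to implement from Definition~\ref{def:predicates}. The only mildly non-trivial predicate to evaluate is $\seq{\cdot}$ (a cyclic-sequence realizability check buried inside $\snec$ and $\rnec$), but this is still a finite combinatorial test on tuples of bounded size. The main ``obstacle'' is therefore not mathematical but simply the bookkeeping of the computer search; the referenced attached dataset records the output, and the count $248395$ is what that search returns.
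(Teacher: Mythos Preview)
Your proposal is correct and matches the paper's approach: the paper simply states that a straightforward computer search establishes $|K|=248395$, with all elements provided in the attached dataset, and your argument supplies exactly the finiteness justification (via the coordinate bounds from $\rbounds$ and $\rboundsextra$) that makes such a search possible. Your write-up is, if anything, more explicit than the paper's one-line proof.
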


The next proposition shows that every element of $K$ determines at
most one point of $\Delta_{3}$. 

{}
\begin{prop}
\label{prop:unique-intercepts}For any pair $(\eta,\zeta)\in K,$
there exists at most one (perhaps no) point $(r_{0},s_{0})\in\Delta_{3}$
for which $\eta\cdot\a(r_{0},s_{0})=2\pi$ and $\zeta\cdot\b(r_{0},s_{0})=2\pi$. 
\end{prop}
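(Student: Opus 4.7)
The plan is to exploit the structural descriptions of the two $2\pi$-contours given by Propositions~\ref{prop:s-contour-analysis} and~\ref{prop:r-contour-analysis}, together with the fact that membership in $K$ forces $\srdisjunct{\eta,\zeta}$. I will split on whether $\rverticalcont{\zeta}$ holds.

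If $\rverticalcont{\zeta}$ is true, then $\zeta_{3}=\zeta_{5}=\zeta_{6}=0$, so $f_{\zeta}(r,s)=\zeta_{1}\b_{1}(r,s)+\zeta_{2}\b_{2}(r,s)+\zeta_{4}\b_{4}(r,s)$ is independent of~$s$. Since $\b_{2}\equiv\pi/3$ is constant while both $\b_{1}(r,s)=\petalangle r11$ and $\b_{4}(r,s)=\petalangle r1r$ are easily checked to be strictly decreasing in~$r$, and $\rnonhex{\zeta}$ forces $\zeta_{1}+\zeta_{4}>0$, the map $r\mapsto f_{\zeta}(r,s)$ is strictly monotone. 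Hence at most one $r_{0}\in(0,1)$ satisfies $f_{\zeta}(r_{0},\,\cdot\,)=2\pi$, and then Proposition~\ref{prop:s-contour-analysis}(\ref{enu:s-contour-implicit-function}) selects at most one corresponding $s_{0}=\phi(r_{0})$, yielding uniqueness.

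If $\rverticalcont{\zeta}$ is false, then both contours are graphs $s=\phi(r)$ and $s=\psi(r)$ of the implicit functions supplied by Propositions~\ref{prop:s-contour-analysis}(\ref{enu:s-contour-implicit-function}) and~\ref{prop:r-contour-analysis}(\ref{enu:r-contour-implicit-function}). Suppose $(r_{0},s_{0})$ is a common intersection and write $m_{0}:=s_{0}/r_{0}$. Proposition~\ref{prop:s-contour-analysis}(\ref{enu:s-contour-intercept-with-phi}) then gives either $\phi(r)>m_{0}r$ for $r<r_{0}$ and $\phi(r)<m_{0}r$ for $r>r_{0}$ (when $\eta\cdot(1,0,0,1,1,0)\neq0$), or $\phi(r)\equiv m_{0}r$; symmetrically, Proposition~\ref{prop:r-contour-analysis}(\ref{enu:r-contour-intercept-with-phi}) yields either $\psi(r)<m_{0}r$ for $r<r_{0}$ and $\psi(r)>m_{0}r$ for $r>r_{0}$ (when $\zeta\cdot(1,0,0,1,1,0)\neq0$), or $\psi(r)\equiv m_{0}r$. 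The predicate $\srdisjunct{\eta,\zeta}$ rules out the case in which both inner products vanish, and in each of the three remaining sub-cases one obtains strict separation $\phi(r)>\psi(r)$ for $r<r_{0}$ and $\phi(r)<\psi(r)$ for $r>r_{0}$, forcing $(r_{0},s_{0})$ to be the only point common to the two contours.

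The main subtlety lies in ensuring that the inequality between $\phi$ and $\psi$ on either side of $r_{0}$ is strict; this is precisely what $\srdisjunct{\eta,\zeta}$ provides, so beyond routine case-bookkeeping no serious obstacle remains.
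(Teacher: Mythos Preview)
Your proof is correct and follows essentially the same approach as the paper: split on $\rverticalcont{\zeta}$, and in the non-vertical case use the ray-crossing behaviour of $\phi$ and $\psi$ from Propositions~\ref{prop:s-contour-analysis}(\ref{enu:s-contour-intercept-with-phi}) and~\ref{prop:r-contour-analysis}(\ref{enu:r-contour-intercept-with-phi}) together with $\srdisjunct{\eta,\zeta}$ to force strict separation away from $(r_{0},s_{0})$. Your treatment of the vertical case is slightly more explicit than the paper's (you argue strict monotonicity of $r\mapsto f_{\zeta}(r,\,\cdot\,)$ directly from $\rnonhex{\zeta}$, whereas the paper invokes Proposition~\ref{prop:r-contour-analysis}(\ref{enu:r-contour-vertical-contours}) and implicitly relies on the same monotonicity to identify the whole contour with a single vertical segment), but the substance is the same.
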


\begin{proof}
Let $(\eta,\zeta)\in K$ be arbitrary. If there exists no point $(r,s)\in\Delta_{3}$
for which $\eta\cdot\a(r,s)=2\pi$ and $\zeta\cdot\b(r,s)=2\pi$,
then we are done. 

Let $(r_{0},s_{0})\in\Delta_{3}$ be such that $\eta\cdot\a(r_{0},s_{0})=2\pi$
and $\zeta\cdot\b(r_{0},s_{0})=2\pi$. We claim that there exists
no other point in $\Delta_{3}$ for which this is true. 

By Proposition~\ref{prop:s-contour-analysis}(\ref{enu:s-contour-implicit-function}),
there exists some $\phi:(a,1)\to\R$ so that $\phi(r_{0})=s_{0}$
and $\eta\cdot\a(r,\phi(r))=2\pi$ for all $r\in(a,1)$. 

We now distinguish between the two cases where $\rverticalcont{\zeta}$
is true and $\rverticalcont{\zeta}$ is false.

If it is the case that $\rverticalcont{\zeta}$ is true, then, by
Proposition~\ref{prop:r-contour-analysis}(\ref{enu:r-contour-vertical-contours}),
we have $\set{(r,s)\in\Delta_{3}}{\zeta\cdot\b(r,s)=2\pi}=\set{(r_{0},s)\in\Delta_{3}}{s\in(0,r_{0})}$,
and hence the pair $(r_{0},s_{0})$ is the only point in $\Delta_{3}$
for which $\eta\cdot\a(r_{0},s_{0})=2\pi$ and $\zeta\cdot\b(r_{0},s_{0})=2\pi$.

On the other hand, if $\rverticalcont{\zeta}$ is false, then, by
Proposition~\ref{prop:r-contour-analysis}(\ref{enu:r-contour-implicit-function}),
there exists some function $\psi:(c,d)\to(0,1)$ satisfying $\psi(r_{0})=s_{0}$
and $\zeta\cdot\b(r,\psi(r))=2\pi$ for all $r\in(c,d)$. Since $\srdisjunct{\eta,\zeta}$
is true, by Propositions~\ref{prop:s-contour-analysis}(\ref{enu:s-contour-g-monotone})
and~\ref{prop:r-contour-analysis}(\ref{enu:r-contour-g-monotone})
we cannot have that both functions $\phi$ and $\psi$ are equal to
the function $(0,1)\ni r\mapsto mr$ for any $m\in(0,1)$. Then, by
Propositions~\ref{prop:s-contour-analysis}(\ref{enu:s-contour-intercept-with-phi})
and~\ref{prop:r-contour-analysis}(\ref{enu:r-contour-intercept-with-phi}),
we have that $(r_{0},s_{0})$ is the only point in $\Delta_{3}$ for
which $\eta\cdot\a(r_{0},s_{0})=2\pi$ and $\zeta\cdot\b(r_{0},s_{0})=2\pi$.
\end{proof}
Now, for any $(r_{0},s_{0})\in\Pi_{3}$ and compact $3$-packing $P$
with $\rad(P)=\{s_{0},r_{0},1\}$, in the following result we will
prove that there must exist circles $A$ and $B$ of respective radii
$s_{0}$ and $r_{0}$, so that $(\xi^{(A)},\xi^{(B)})\in K$.
\begin{prop}
\label{prop:final-necessary-condition-for-3-packing}Let $(r_{0},s_{0})\in\Pi_{3}$
and let $P$ be any compact $3$-packing with $\rad(P)=\{s_{0},r_{0},1\}$.
There exists circles $A,B\in P$ of respective radii $s_{0}$ and
$r_{0}$ so that the following is true:
\begin{align*}
 & \snec{\xi^{(A)}}\wedge\rnec{\xi^{(B)}}\\
 & \quad\quad\wedge\srdisjunct{\xi^{(A)},\xi^{(B)}}\wedge\rboundsextra{\xi^{(B)}}\\
 & \quad\quad\quad\quad\wedge\ \ \xi^{(A)}\cdot\a(r_{0},s_{0})=2\pi\ \ \wedge\ \ \xi^{(B)}\cdot\b(r_{0},s_{0})=2\pi.
\end{align*}
In particular, we have  $(\xi^{(A)},\xi^{(B)})\in K.$
\end{prop}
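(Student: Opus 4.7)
The strategy is to invoke Theorem~\ref{thm:necessary-conditions-on-r-s-tuples} to obtain circles $A$ and $B$ satisfying almost all of the claimed predicates directly, and then to deduce the remaining predicate $\rboundsextra{\xi^{(B)}}$ from the two ``global'' contour bounds of Section~\ref{sec:Contour-analysis}.

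First I would apply Theorem~\ref{thm:necessary-conditions-on-r-s-tuples} to $P$ to produce circles $A,B \in P$ of respective radii $s_{0}$ and $r_{0}$. Reading the conclusions of that theorem against Definition~\ref{def:predicates}, items~(2), (3), (9) and~(10) furnish exactly the conjuncts $\snonhex{\xi^{(A)}}$, $\sbounds{\xi^{(A)}}$, $\modtwo{\xi^{(A)}}$ and $\seq{\xi^{(A)}}$ making up $\snec{\xi^{(A)}}$; items~(4), (5), (6), (7), (9) and~(10) similarly assemble $\rnec{\xi^{(B)}}$; and item~(1) is precisely $\srdisjunct{\xi^{(A)},\xi^{(B)}}$. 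The two equalities $\xi^{(A)} \cdot \a(r_{0},s_{0}) = 2\pi$ and $\xi^{(B)} \cdot \b(r_{0},s_{0}) = 2\pi$ are immediate from the definition of angle-counts given in Section~\ref{sec:Preliminaries}. Thus the only remaining thing to verify is $\rboundsextra{\xi^{(B)}}$, i.e., that $\xi^{(B)} \cdot (0,0,1,0,0,0) < 35$.

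For this I would argue by contradiction: suppose instead that $\xi^{(B)}_{3} \geq 35$. Then Proposition~\ref{prop:r-contour-analysis}(\ref{enu:r-contour-global-bound}) gives $\xi^{(B)} \cdot \b(r,s) > 2\pi$ for every $(r,s) \in \Delta_{3}$ with $s \geq 10^{-1}r$, so from $\xi^{(B)} \cdot \b(r_{0},s_{0}) = 2\pi$ we must have $s_{0} < 10^{-1}r_{0}$. On the other hand, because $\snec{\xi^{(A)}}$ holds, Proposition~\ref{prop:s-contour-analysis}(\ref{enu:s-contour-global-bound}) gives $\xi^{(A)} \cdot \a(r,s) > 2\pi$ for every $(r,s) \in \Delta_{3}$ with $s \leq 10^{-1}r$, so from $\xi^{(A)} \cdot \a(r_{0},s_{0}) = 2\pi$ we must have $s_{0} > 10^{-1}r_{0}$. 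These two inequalities are contradictory, so $\xi^{(B)}_{3} < 35$, which is $\rboundsextra{\xi^{(B)}}$. Combining everything then gives $(\xi^{(A)},\xi^{(B)}) \in K$.

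I do not anticipate a serious obstacle in executing this plan: the heavy work is already done inside Theorem~\ref{thm:necessary-conditions-on-r-s-tuples} and in the two global contour bounds, which were designed precisely so that their respective forbidden half-regions $\{s \leq 10^{-1}r\}$ and $\{s \geq 10^{-1}r\}$ together cover all of $\Delta_{3}$. The only point requiring mild care is to use the \emph{same} pair $(A,B)$ throughout, which is automatic from the statement of Theorem~\ref{thm:necessary-conditions-on-r-s-tuples}, and to check that the implication in the definition of $\rfewlargeneighbors$ matches item~(7) of that theorem verbatim.
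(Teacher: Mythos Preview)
Your proposal is correct and follows essentially the same route as the paper's proof: invoke Theorem~\ref{thm:necessary-conditions-on-r-s-tuples} to obtain $A,B$ with $\snec{\xi^{(A)}}$, $\rnec{\xi^{(B)}}$, $\srdisjunct{\xi^{(A)},\xi^{(B)}}$ and the two $2\pi$-equalities, then derive $\rboundsextra{\xi^{(B)}}$ by the same contradiction using Propositions~\ref{prop:s-contour-analysis}(\ref{enu:s-contour-global-bound}) and~\ref{prop:r-contour-analysis}(\ref{enu:r-contour-global-bound}). The only difference is cosmetic: you spell out which items of the theorem match which conjuncts of the predicates, whereas the paper simply asserts the match.
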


\begin{proof}
By Theorem~\ref{thm:necessary-conditions-on-r-s-tuples} there exist
circles $A,B\in P$ so that 
\[
\snec{\xi^{(A)}}\wedge\rnec{\xi^{(B)}}\wedge\srdisjunct{\xi^{(A)},\xi^{(B)}}
\]
is true. By definition of the angle-counts $\xi^{(A)}$ and $\xi^{(B)}$,
we have that $\xi^{(A)}\cdot\a(r_{0},s_{0})=2\pi$ and $\xi^{(B)}\cdot\b(r_{0},s_{0})=2\pi$. 

Suppose that $\rboundsextra{\xi^{(B)}}$ is false. Then, since $\xi^{(B)}\cdot\b(r_{0},s_{0})=2\pi,$
by Proposition~\ref{prop:r-contour-analysis}(\ref{enu:r-contour-global-bound}),
we have that $s_{0}<10^{-1}r_{0}$. However, since $\xi^{(A)}\cdot\a(r_{0},s_{0})=2\pi$,
Proposition~\ref{prop:s-contour-analysis}(\ref{enu:s-contour-global-bound})
yields the contradictory inequality $s_{0}>10^{-1}r_{0}$. Therefore
$\rboundsextra{\xi^{(B)}}$ is true.
\end{proof}
Finally we are able to prove one of our main results:
\begin{thm}
\label{thm:set-radii-admitting-3packings-are-finite}The set $\Pi_{3}$
is finite and $|\Pi_{3}|\leq|K|=248395.$
\end{thm}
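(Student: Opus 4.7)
The plan is to define an injection $\Psi \colon \Pi_{3} \to K$ and then invoke Proposition~\ref{prop:tuples-satisfying-necessary-conditions-are-finite} to conclude $|\Pi_{3}| \leq |K| = 248395$. All the heavy lifting has already been done in the preceding propositions, so the proof will be essentially a matter of threading them together.

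First I would construct $\Psi$. Fix any $(r_{0},s_{0})\in\Pi_{3}$. By definition of $\Pi_{3}$ there exists at least one compact $3$-packing $P$ with $\rad(P)=\{s_{0},r_{0},1\}$; select one such $P$. Applying Proposition~\ref{prop:final-necessary-condition-for-3-packing} to this $P$ yields circles $A,B\in P$ of respective radii $s_{0}$ and $r_{0}$ for which the pair of angle-counts $(\xi^{(A)},\xi^{(B)})$ satisfies all four predicates $\snec{\xi^{(A)}}$, $\rnec{\xi^{(B)}}$, $\rboundsextra{\xi^{(B)}}$ and $\srdisjunct{\xi^{(A)},\xi^{(B)}}$, so that $(\xi^{(A)},\xi^{(B)})\in K$. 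Define $\Psi(r_{0},s_{0}):=(\xi^{(A)},\xi^{(B)})$; this requires making choices of $P$, $A$ and $B$, but any such choice will do for our purposes.

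Next I would verify injectivity of $\Psi$. Suppose $(r_{0},s_{0})$ and $(r_{0}',s_{0}')$ are both elements of $\Pi_{3}$ with $\Psi(r_{0},s_{0})=\Psi(r_{0}',s_{0}')=(\eta,\zeta)\in K$. The construction of $\Psi$ via Proposition~\ref{prop:final-necessary-condition-for-3-packing} guarantees the simultaneous equations
\[
\eta\cdot\a(r_{0},s_{0})=2\pi,\quad\zeta\cdot\b(r_{0},s_{0})=2\pi,
\]
and likewise for $(r_{0}',s_{0}')$. Proposition~\ref{prop:unique-intercepts} then asserts that at most one point of $\Delta_{3}$ can simultaneously satisfy these two equations, forcing $(r_{0},s_{0})=(r_{0}',s_{0}')$. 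Thus $\Psi$ is injective.

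Finally, combining injectivity of $\Psi$ with Proposition~\ref{prop:tuples-satisfying-necessary-conditions-are-finite} gives $|\Pi_{3}|\leq|K|=248395$, and in particular $\Pi_{3}$ is finite. There is no real obstacle in this final assembly — the substantive work lies in Propositions~\ref{prop:tuples-satisfying-necessary-conditions-are-finite}, \ref{prop:unique-intercepts} and~\ref{prop:final-necessary-condition-for-3-packing}, which in turn rest on the contour analysis of Section~\ref{sec:Contour-analysis} (notably parts~(\ref{enu:s-contour-global-bound}) of Proposition~\ref{prop:s-contour-analysis} and~(\ref{enu:r-contour-global-bound}) of Proposition~\ref{prop:r-contour-analysis}) together with the exhaustive enumeration of $\set{\eta\in\tuples}{\snec{\eta}}$ in Proposition~\ref{prop:55-s-angle-counts}.
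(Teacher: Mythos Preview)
Your proof is correct and essentially the same as the paper's. The only cosmetic difference is that the paper introduces the intermediate set $L:=\set{(r,s)\in\Delta_{3}}{\exists(\eta,\zeta)\in K,\ \eta\cdot\a(r,s)=2\pi,\ \zeta\cdot\b(r,s)=2\pi}$ and argues $\Pi_{3}\subseteq L$ together with $|L|\leq|K|$, whereas you package the same three propositions into an explicit injection $\Psi:\Pi_{3}\to K$; the logical content is identical.
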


\begin{proof}
We define 
\[
L:=\set{(r,s)\in\Delta_{3}}{\begin{array}{c}
\exists(\eta,\zeta)\in K,\\
\eta\cdot\a(r,s)=2\pi,\\
\zeta\cdot\b(r,s)=2\pi.
\end{array}}.
\]
By Proposition~\ref{prop:tuples-satisfying-necessary-conditions-are-finite},
the set $K$ has $248395$ elements and for each $(\eta,\zeta)\in K$,
by Proposition~\ref{prop:unique-intercepts}, there exists at most
one point $(r_{0},s_{0})\in\Delta_{3}$ for which $\eta\cdot\a(r_{0},s_{0})=2\pi$
and $\zeta\cdot\b(r_{0},s_{0})=2\pi$. Therefore, we have $|L|\leq|K|.$ 

We claim that $\Pi_{3}\subseteq L$. Let $(r_{0},s_{0})\in\Pi_{3}$
and $P$ be a compact $3$-packing with $\rad(P)=\{s_{0},r_{0},1\}$.
By Proposition~\ref{prop:final-necessary-condition-for-3-packing},
there exist circles $A$ and $B$ in $P$ so that $(\xi^{(A)},\xi^{(B)})\in K$
and $\xi^{(A)}\cdot\a(r_{0},s_{0})=2\pi$ and $\xi^{(B)}\cdot\b(r_{0},s_{0})=2\pi$.
Therefore $(r_{0},s_{0})\in L$ and hence $\Pi_{3}\subseteq L$. We
conclude that $|\Pi_{3}|\leq|L|\leq|K|\leq248395.$
\end{proof}

\section{Necessary and sufficient conditions for contour intercepts\label{sec:Necessary-and-sufficient-conditions-for-contour-intercepts}}

With $K$ as defined in Section~\ref{sec:Pi_3 is finite}, in the
current section we will provide necessary and sufficient conditions
on elements $(\eta,\xi)\in K$ for there to exist some $(r_{0},s_{0})\in\Delta_{3}$
satisfying $\eta\cdot\a(r_{0},s_{0})=2\pi$ and $\zeta\cdot\a(r_{0},s_{0})=2\pi$.
These necessary and sufficient conditions allow for computing a sharper
bound on $|\Pi_{3}|$ in the next section.
\begin{prop}
\label{prop:intercept-necessary-and-sufficient-conditions}Let $(\eta,\zeta)\in K$
with $\phi:(a,1)\to(0,1)$ and $\psi:(c,d)\to(0,1)$ as yielded by
applying Propositions~\ref{prop:s-contour-analysis}(\ref{enu:s-contour-implicit-function})
and~\ref{prop:r-contour-analysis}(\ref{enu:r-contour-implicit-function})
to $\eta$ and $\zeta$ respectively. The statements (1) and (2) are
equivalent:
\begin{enumerate}
\item There exists a unique point $(r_{0},s_{0})\in\Delta_{3}$ for which
$\eta\cdot\a(r_{0},s_{0})=2\pi$ and $\zeta\cdot\b(r_{0},s_{0})=2\pi$.\bigskip
\item Either $\rverticalcont{\zeta}$ is true and there exists some $r_{0}\in(a,1)$
so that $\eta\cdot\a(r_{0},\phi(r_{0}))=2\pi$ and $\zeta\cdot\b(r_{0},\phi(r_{0}))=2\pi$;
\textbf{or} $\rverticalcont{\zeta}$ is false, and all of the following
hold:\medskip
\begin{enumerate}
\item $a<d$.\medskip
\item If $a=c=0$, then $\lim_{r\downarrow0}\frac{\psi(r)}{r}<\lim_{r\downarrow0}\frac{\phi(r)}{r}.$\medskip
\item If $d=1$, then $\lim_{r\uparrow1}\phi(r)<\lim_{r\uparrow1}\psi(r).$\medskip
\end{enumerate}
\end{enumerate}
\end{prop}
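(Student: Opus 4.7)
The plan is to split on the truth value of $\rverticalcont{\zeta}$; in the vertical case the relevant contour is a single segment and the equivalence is essentially immediate, while in the non-vertical case I reduce the intersection question to a sign-change analysis of a scalar function on a single interval.

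If $\rverticalcont{\zeta}$ is true, Proposition~\ref{prop:r-contour-analysis}(\ref{enu:r-contour-vertical-contours}) identifies the $2\pi$-contour of $\zeta\cdot\b$ with a vertical segment $\{r^{\ast}\}\times(0,r^{\ast})$. Its intersection with the graph of $\phi$ is at most the single point $(r^{\ast},\phi(r^{\ast}))$, and this is a valid point of $\Delta_{3}$ lying on both contours precisely when $r^{\ast}\in(a,1)$, which is exactly the condition asserted in (2); uniqueness is Proposition~\ref{prop:unique-intercepts}.

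If $\rverticalcont{\zeta}$ is false, Proposition~\ref{prop:r-contour-analysis}(\ref{enu:r-contour-implicit-function}) gives $\psi:(c,d)\to(0,1)$, and an intersection corresponds to a solution of $\phi(r)=\psi(r)$ on the overlap $I:=(\max(a,c),d)$, which is non-empty iff $a<d$; this yields (2)(a). I would set $h(r):=\phi(r)/r-\psi(r)/r$ on $I$, and observe, via Propositions~\ref{prop:s-contour-analysis}(\ref{enu:s-contour-intercept-with-phi}) and~\ref{prop:r-contour-analysis}(\ref{enu:r-contour-intercept-with-phi}), that $r\mapsto\phi(r)/r$ is either strictly decreasing or constant (with $a=0$ in the constant case), while $r\mapsto\psi(r)/r$ is strictly increasing or constant. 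Since $\srdisjunct{\eta,\zeta}$ holds, at least one is strictly monotone, so $h$ is strictly decreasing on $I$. Consequently $h$ has a (necessarily unique) zero if and only if it is positive near the left endpoint and negative near the right endpoint of $I$.

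The remaining step is to convert these endpoint-sign conditions into (2)(b) and (2)(c). I claim that whenever $\max(a,c)>0$ the positivity of $h$ near the left endpoint is automatic, and whenever $d<1$ the negativity of $h$ near the right endpoint is automatic; the sign conditions thus require explicit imposition only in the boundary cases $a=c=0$ (yielding (2)(b)) and $d=1$ (yielding (2)(c)). For the automatic cases, the terminus of each graph at an interior endpoint must lie on $\partial\Delta_{3}\cap\{s\in\{0,r\}\}$; strict monotonicity of $\phi/r$ rules out $\phi(a^{+})=0$ and so forces $\phi/r\to1$ when $a>0$, while strict monotonicity of $\psi/r$ rules out $\psi(c^{+})=c$ when $c>0$ and rules out $\psi(d^{-})=0$ when $d<1$, yielding $\psi/r\to0$ or $\psi/r\to1$ at the corresponding interior endpoint. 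The main obstacle is precisely this boundary analysis, where one must correctly exclude the ``wrong'' component of $\partial\Delta_{3}$ at each contour terminus using the monotonicity of the ratios $\phi/r$ and $\psi/r$.
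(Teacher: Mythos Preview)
Your argument is correct and rests on the same ingredients as the paper's proof, namely Propositions~\ref{prop:s-contour-analysis}(\ref{enu:s-contour-intercept-with-phi}) and~\ref{prop:r-contour-analysis}(\ref{enu:r-contour-intercept-with-phi}) together with the boundary behaviour of $\phi$ and $\psi$ at the endpoints of their domains. The organisation, however, is genuinely tidier than the paper's. By introducing the scalar function $h(r)=\phi(r)/r-\psi(r)/r$ and observing that it is \emph{strictly} decreasing on $I=(\max(a,c),d)$ (one summand weakly monotone in each direction, and $\srdisjunct{\eta,\zeta}$ forcing at least one to be strict), you obtain existence and uniqueness of the intersection simultaneously from a single sign-change criterion. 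The paper instead argues existence via the Intermediate Value Theorem applied to $\phi-\psi$ and then invokes Proposition~\ref{prop:unique-intercepts} separately for uniqueness; it also treats the endpoint limits more ad hoc (sequences for (2)(b), a footnoted argument for (2)(c)). Your reduction to the monotone ratio $h$ makes the equivalence transparent: the only non-automatic sign conditions occur precisely at the degenerate endpoints $a=c=0$ and $d=1$, which is exactly what (2)(b) and (2)(c) encode. One small point worth making explicit in a full write-up is the implication you use tacitly: $a>0$ forces $\eta\cdot(1,0,0,1,1,0)\neq0$, and $c>0$ or $d<1$ forces $\zeta\cdot(1,0,0,1,1,0)\neq0$, so that the relevant ratio is indeed strictly monotone in each ``automatic'' case; this follows from the constant-case clauses of Propositions~\ref{prop:s-contour-analysis}(\ref{enu:s-contour-intercept-with-phi})(b) and~\ref{prop:r-contour-analysis}(\ref{enu:r-contour-intercept-with-phi})(b).
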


\begin{proof}
We prove that (1) implies (2). Let $(r_{0},s_{0})\in\Delta_{3}$ be
the unique point for which $\eta\cdot\a(r_{0},s_{0})=2\pi$ and $\zeta\cdot\b(r_{0},s_{0})=2\pi$. 

If $\rverticalcont{\zeta}$ is true then, since $s_{0}=\phi(r_{0})$,
we immediately have that $\eta\cdot\a(r_{0},\phi(r_{0}))=2\pi$ and
$\zeta\cdot\b(r_{0},\phi(r_{0}))=2\pi$.

On the other hand, if $\rverticalcont{\zeta}$ is false, with $m_{0}:=s_{0}/r_{0}$
we immediately note that, since $\srdisjunct{\eta,\zeta}$ is true,
by Propositions \ref{prop:s-contour-analysis}(\ref{enu:r-contour-intercept-with-phi})
and \ref{prop:r-contour-analysis}(\ref{enu:r-contour-intercept-with-phi})
we cannot have that both $\psi$ and $\phi$ are equal to the function
$(0,1)\ni r\mapsto m_{0}r.$

We prove (2)(a). Noting that $\phi(r_{0})=\psi(r_{0})=s_{0}$ we have
$r_{0}\in(c,d)\cap(a,1)$ so that $a<r_{0}<d$, establishing (2)(a). 

We prove (2)(b). Assume $a=c=0$ and let $(x_{n})\subseteq(0,r_{0})$
be any strictly decreasing sequence that converges to zero. Assuming
$\phi$ does not equal the function $r\mapsto m_{0}r$, by repeatedly
applying Proposition~\ref{prop:s-contour-analysis}(\ref{enu:s-contour-intercept-with-phi}),
we notice that 
\[
\phi(x_{n+1})>\frac{\phi(x_{n})}{x_{n}}x_{n+1}
\]
which implies that $(\phi(x_{n})/x_{n})$ is strictly increasing.
Since $(\phi(x_{n})/x_{n})$ is bounded above by $1$, the limit $\lim_{r\downarrow0}\frac{\phi(r)}{r}$
exists by The Monotone Convergence Theorem and is strictly greater
than $m_{0}$ since $r_{0}^{-1}\phi(r_{0})=m_{0}$. Similarly, by
Proposition~\ref{prop:r-contour-analysis}(\ref{enu:r-contour-intercept-with-phi})
and assuming $\psi$ does not equal the function $r\mapsto m_{0}r$,
we obtain 
\[
\psi(x_{n+1})<\frac{\psi(x_{n})}{x_{n}}x_{n+1}.
\]
The sequence is strictly $(\psi(x_{n})/x_{n})$ is strictly decreasing
and bounded below by zero, hence the limit $\lim_{r\downarrow0}\frac{\psi(r)}{r}$
exists and strictly less than $m_{0}$ since $r_{0}^{-1}\psi(r_{0})=m_{0}$.
Hence $\lim_{r\downarrow0}\frac{\psi(r)}{r}\leq m_{0}\leq\lim_{r\downarrow0}\frac{\phi(r)}{r}$
and one of the inequalities must be strict, since not both $\psi$
and $\phi$ are equal to the function $(0,1)\ni r\mapsto m_{0}r$,
establishing~(2)(b).

We prove (2)(c). Assume $d=1$. By Propositions~\ref{prop:s-contour-analysis}(\ref{enu:s-contour-intercept-with-phi})
and~\ref{prop:r-contour-analysis}(\ref{enu:r-contour-intercept-with-phi})
we have
\[
\lim_{r\uparrow1}\phi(r)\leq m_{0}\leq\lim_{r\uparrow1}\psi(r).
\]
Since $\srdisjunct{\eta,\zeta}$ is true, Propositions~\ref{prop:s-contour-analysis}(\ref{enu:s-contour-intercept-with-phi})
and~\ref{prop:r-contour-analysis}(\ref{enu:r-contour-intercept-with-phi})
imply that one of these inequalities must be strict\footnote{\label{fn:strict-ineq-1}With $m_{0}:=s_{0}/r_{0}$ and any fixed
$r_{1}\in(r_{0},1)$ define $m_{1}:=\phi(r_{1})/r_{1}$. By Proposition~\ref{prop:s-contour-analysis}(\ref{enu:s-contour-g-monotone}),
if $\phi$ does not equal the function $(0,1)\ni r\mapsto m_{0}r$,
then $\phi(r_{1})<m_{0}r_{1}$ and for all $r\in(r_{1},1)$, we have
$\phi(r)<m_{1}r<m_{0}r_{1}<m_{0}r$. So that $\lim_{r\uparrow1}\phi(r)\leq m_{1}<m_{0}.$
A similar argument holds for $\psi$ through application of Proposition~\ref{prop:r-contour-analysis}(\ref{enu:r-contour-intercept-with-phi}).}. This establishes (2)(c).

We prove (2) implies (1). 

Assume $\rverticalcont{\zeta}$ is true and there exists some $r_{0}\in(a,1)$
so that $\eta\cdot\a(r_{0},\phi(r_{0}))=2\pi$ and $\zeta\cdot\b(r_{0},\phi(r_{0}))=2\pi$.
With $s_{0}:=\phi(r_{0})$, by Proposition~\ref{prop:unique-intercepts},
$(r_{0},s_{0})\in\Delta_{3}$ is the unique point for which $\eta\cdot\a(r_{0},s_{0})=2\pi$
and $\zeta\cdot\b(r_{0},s_{0})=2\pi$.

Assume $\rverticalcont{\zeta}$ is false and all the statements (2)(a),
(2)(b) or (2)(c) are true. 

If $d<1$, then Proposition~\ref{prop:r-contour-analysis}(\ref{enu:r-contour-intercept-with-phi})
implies that $\lim_{r\uparrow d}\psi(r)=d$, so that, regardless of
the value of $d$ (using (2)(c) when $d=1$), we have $\lim_{r\uparrow d}\phi(r)<\lim_{r\uparrow d}\psi(r)$.
Hence there exists some $r_{1}\in(\max\{a,c\},d)$ so that $\phi(r_{1})<\psi(r_{1})$. 

Also, Proposition~\ref{prop:s-contour-analysis}(\ref{enu:s-contour-intercept-with-phi})
implies that $\lim_{r\downarrow a}\phi(r)=a$ and Proposition~\ref{prop:r-contour-analysis}(\ref{enu:r-contour-intercept-with-phi})
implies that $\lim_{r\downarrow c}\psi(r)=0$. Therefore, if $\max\{a,c\}>0$,
then there exists some $r_{2}\in(\max\{a,c),r_{1})$ so that $\psi(r_{2})<\phi(r_{2})$.
On the other hand if $\max\{a,c\}=0$, then by (2)(b) there also exists
some $r_{2}\in(0,r_{1})$ so that $\psi(r_{2})/r_{2}<\phi(r_{2})/r_{2}$,
and hence we also have $\psi(r_{2})<\phi(r_{2})$.

Now, by the Intermediate Value Theorem, there exists some $r_{0}\in(r_{2},r_{1})$
so that $s_{0}:=\phi(r_{0})=\psi(r_{0})$ and hence $\eta\cdot\a(r_{0},s_{0})=2\pi$
and $\zeta\cdot\b(r_{0},s_{0})=2\pi$. By Proposition~\ref{prop:unique-intercepts},
we may conclude this point $(r_{0},s_{0})$ is unique.
\end{proof}

\section{\label{sec:Numerical-computation-of-upperbound-of-Pi3}Computation
of an upper bound for $|\Pi_{3}|$}

With $K$ as defined in Section~\ref{sec:Pi_3 is finite}, for any
$(\eta,\zeta)\in K$, the quantities $\lim_{r\downarrow0}\frac{\psi(r)}{r}$,
$\lim_{r\downarrow0}\frac{\phi(r)}{r}$, $\lim_{r\uparrow1}\phi(r)$,
$\lim_{r\uparrow1}\psi(r)$ etc. mentioned in Proposition~\ref{prop:intercept-necessary-and-sufficient-conditions}(2)
can easily be computed numerically to arbitrary precision. We computed
these quantities to precision $10^{-300}$ using \emph{Sympy} \cite{sympy}
in conjunction with \emph{mpmath} \cite{mpmath}. In many cases these
approximations are sufficiently accurate to determine whether or not
the inequalities in Proposition~\ref{prop:intercept-necessary-and-sufficient-conditions}(2)
are strict, and hence whether elements in $(\eta,\zeta)\in K$ determine
a unique point $(r_{0},s_{0})\in\Delta_{3}$ for which $\eta\cdot\a(r_{0},s_{0})=2\pi$
and $\zeta\cdot\b(r_{0},s_{0})=2\pi$.

The remaining cases where strict inequality between the quantities
in Proposition~\ref{prop:intercept-necessary-and-sufficient-conditions}(2)
could not be definitively determined by high precision numerical approximations,
the quantities were each determined exactly as a root of a polynomial.
We briefly describe how we determine these polynomials. In Section~\ref{sec:exact-values-for-L}
we show how we may determine bivariate polynomials $p$ and $q$ whose
solution sets necessarily contain the graphs of the functions $\phi$
and $\psi$ as yielded by Propositions~\ref{prop:s-contour-analysis}(\ref{enu:s-contour-implicit-function})
and~\ref{prop:r-contour-analysis}(\ref{enu:r-contour-implicit-function}).
The limit $\lim_{r\downarrow0}\frac{\phi(r)}{r}$ is then a solution
in $m$ of the polynomial equation $\lim_{r\to0}p(r,mr)=0$. The values
$a$ and $\lim_{r\to1}\phi(r)$ appearing in Proposition~\ref{prop:intercept-necessary-and-sufficient-conditions}(2)
are solutions of the polynomial equations $p(r,r)=0$ and $p(1,s)=0$
respectively. Similarly for $q$ and $\psi$.

Now, where strict inequality between the quantities in Proposition~\ref{prop:intercept-necessary-and-sufficient-conditions}(2)
could not be determined by numerics, they can be confirmed as equal
by being the roots of identical polynomials in all remaining cases.
Hence, by Proposition~\ref{prop:intercept-necessary-and-sufficient-conditions},
such $(\eta,\zeta)\in K$ do not admit a solution in $\Delta_{3}$
to the equations $\eta\cdot\a(r,s)=2\pi$ and $\zeta\cdot\b(r,s)=2\pi$.

Finally, the results of these computations together with Proposition~\ref{prop:intercept-necessary-and-sufficient-conditions},
allow for determining all the elements $(\eta,\zeta)\in K$ satisfying
\[
\eta\cdot\a(r_{0},s_{0})=\zeta\cdot\b(r_{0},s_{0})=2\pi
\]
for a unique $(r_{0},s_{0})\in\Delta_{3}$. This, in turn, allows
for determining an upper bound on the cardinality of the set
\[
L:=\set{(r,s)\in\Delta_{3}}{\begin{array}{c}
\exists(\eta,\zeta)\in K,\\
\eta\cdot\a(r,s)=2\pi,\\
\zeta\cdot\b(r,s)=2\pi.
\end{array}},
\]
which contains $\Pi_{3}$ (cf. Theorem~\ref{thm:set-radii-admitting-3packings-are-finite}).
This establishes the sharper bound
\[
|\Pi_{3}|\leq|L|\leq13617.
\]

We note that excluding certain (numerically approximated) elements
from $L$ as not actually being in $\Pi_{3}$ seems to be computationally
infeasible on consumer hardware. By Theorem~\ref{thm:necessary-conditions-on-r-s-tuples},
a necessary condition for $(r_{0},s_{0})\in L$ to be an element of
$\Pi_{3}$ is that there exists some $\xi\in\tuples$ for which $\onec{\xi}$
is true and $\xi\cdot\c(r_{0},s_{0})=2\pi$. The closer a point $(r_{0},s_{0})\in L$
is to the origin, the larger the search space of elements $\xi\in\tuples$
becomes for which one must verify $\onec{\xi}$ and $\xi\cdot\c(r_{0},s_{0})=2\pi$.
E.g., $(r_{1},s_{1})=(0.0000581261\ldots,0.0000125188\ldots)\in\Delta_{3}$
approximates an element\footnote{The point $(0.0000581261602\ldots,0.0000125188787\ldots)\in\Delta_{3}$
approximates the point $(r_{0},s_{0})\in L$ which is defined as the
solution of 
\[
(0,0,1,1,1,1)\cdot\a(r,s)=(1,0,4,0,2,0)\cdot\b(r,s)=2\pi.
\]
The exact values of $r_{0}$ and $s_{0}$ are as roots of the 16th
degree polynomials\begin{dmath*}
471537r^{16}-41484960r^{15}-659124096r^{14}+58464363120r^{13}+1743725080084r^{12}+17900565761408r^{11}+80565633090512r^{10}+135832773328592r^{9}-55749863701666r^{8}-312172905934624r^{7}-79130757636960r^{6}+18998456541200r^{5}+5684720044996r^{4}-232167452096r^{3}-4432749936r^{2}-23293776r+1369
\end{dmath*}and%

\begin{dmath*}
9s^{16}-2952s^{15}+297624s^{14}-9490392s^{13}+146307340s^{12}-1264707784s^{11}+6454982728s^{10}-19303597784s^{9}+30925167782s^{8}-17475748952s^{7}-13037319960s^{6}+14055271864s^{5}+4034895724s^{4}-2996664152s^{3}-1151616584s^{2}-109340424s+1369
\end{dmath*} respectively (cf. Section~\ref{sec:exact-values-for-L}).} in $L$ and by computing $\c(r_{1},s_{1})$, a naive bound on the
number of elements $\xi\in\tuples$ in the search space can be seen
to be roughly $7\times10^{21}$ which will require a considerable
length of time to sift through. Some experiments with programs written
in \emph{Cython} \cite{cython} indicate that 12 months of computation
on a modest quad-core desktop PC, utilizing all four cores, is an
extremely optimistic estimate for how long such a search might take.
Hence, a reasonably sized compute cluster is required to perform such
searches within a reasonable time. Searches might also be further
sped up by utilizing graphics processing units.

Furthermore, to confirm that an element $L$ is in $\Pi_{3}$, it
is of course required to construct a compact packing with the specified
radii.

Still, for some elements of $L$ (which are not too close to the origin)
one is able to verify within a reasonable amount of time whether or
not they satisfy the mentioned necessary condition, and ultimately,
whether they are elements of $\Pi_{3}$ by constructing packings.
We display some of them in the last section.

\section{\label{sec:exact-values-for-L}Exact computation of elements from
$L.$}

With $L$ as defined in the previous section, we describe how we may
compute exact values of elements of $L$ (and hence of elements of
$\Pi_{3}$) as roots of polynomials.

With $(\eta,\zeta)\in K$ and $(r_{0},s_{0})\in L$ satisfying $\eta\cdot\a(r_{0},s_{0})=2\pi$
and $\zeta\cdot\b(r_{0},s_{0})=2\pi$, we consider the equations 
\[
\cos(\eta\cdot\a(r,s))-1=0\quad\text{and}\quad\cos(\zeta\cdot\b(r,s))-1=0.
\]
A simple algorithm (Algorithm~\ref{alg:detrig} below) can be used
to manipulate the above system into a system of two-variable polynomial
equations, for certain $p,q\in\Z[r,s]$,
\begin{align*}
p(r,s) & =0\\
q(r,s) & =0
\end{align*}
that necessarily has $(r_{0},s_{0})$ as a solution. 

With $\phi$ and $\psi$ as yielded by Propositions~\ref{prop:s-contour-analysis}
and~\ref{prop:r-contour-analysis}, by construction, the polynomials
$p$ and $q$ will necessarily satisfy $p(r,\phi(r))=0$ and $q(r,\psi(r))=0$
for all $r$ in the respective domains of $\phi$ and $\psi$. This
observation allows for exactly computing the quantities mentioned
in Proposition~\ref{prop:intercept-necessary-and-sufficient-conditions}(2)
as discussed in Section~\ref{sec:Numerical-computation-of-upperbound-of-Pi3}.

Although Algorithm~\ref{alg:detrig} is very simple and easily implemented
in a computer algebra system, for certain values in $\tuples$ the
computation may be slow and very RAM intensive yielding large\footnote{The polynomial yielded by applying Algorithm~\ref{alg:detrig} to
the expression $\cos((1,1,12,1,1,1)\cdot\b(r,s))-1$ is 33.6 megabytes
when written unfactorized to a plain text file.} results. 

Furthermore, by computing appropriate Gr\"obner bases for the ideal
generated by $p$ and $q$ (cf. \cite[Section~2.3]{AdamsLoustaunau}
or \cite[Chapter~3]{CoxLittleOShea}) we may eliminate a variable
from each polynomial and  hence express the coordinates of $(r_{0},s_{0})\in L$
as a roots of univariate polynomials. Again, for certain inputs, computing
these Gr\"obner bases can sometimes be very RAM intensive. Some of
our computations required more than 200GB of RAM, at which point they
were halted.

We implemented Algorithm~\ref{alg:detrig} below in \emph{Sympy \cite{sympy}
}and\emph{ SymEngine \cite{SymEngine}, }and used\emph{ Singular \cite{Singular}
}for further factoring of results and for computing Gr\"obner bases.
\begin{lyxalgorithm}
{}\label{alg:detrig}
\begin{enumerate}
\item \textbf{Input:} A \textbf{\emph{given expression}}\emph{ }which is
of the form 
\[
\cos(\eta\cdot\t(r,s))-1
\]
 where $\eta\in\tuples$ and $\t\in\{\a,\b,\c\}$.\medskip
\item Expand the \textbf{\emph{given expression}} and convert all terms
in it to have a common denominator. Define the\emph{ }\textbf{\emph{partial
result}} as the numerator of this expression (Note that, by inspection
of the relevant trigonometric identities, the denominator can be seen
to be non-zero for all $(r,s)\in\Delta_{3}$, and may hence be disregarded.
Furthermore, all radicals that occur are square-roots).\medskip
\item While the\emph{ }\textbf{\emph{partial result}}\emph{ }has terms with
square-roots of expressions in variables $r$ and/or $s$ as factors,
we repeatedly do the following:\medskip
\begin{itemize}
\item Let \textbf{\emph{rad}}\emph{ }be any square-root of an expression
in $r$ and/or $s$ occurring as a factor to a term in the \textbf{\emph{partial
result}}\emph{.}\medskip
\item Let \textbf{\emph{left}}\emph{ }be the sum of all terms in the \textbf{\emph{partial
result}}\emph{ }which contains \textbf{\emph{rad}}\emph{ }as a factor\medskip
\item Let\emph{ }\textbf{\emph{right}}\emph{ }be the sum of all terms in
the \textbf{\emph{partial result}}\emph{ }which do not contain \textbf{\emph{rad}}\emph{
}as a factor.\medskip
\item Fully expand both sides of the equation \textbf{\emph{left}}\emph{$^{2}$
= }\textbf{\emph{right}}\emph{$^{2}$} and redefine the \textbf{\emph{partial
result}}\emph{ }as \textbf{\emph{left}}\emph{$^{2}$ \textendash{}
}\textbf{\emph{right}}\emph{$^{2}$}.\medskip
\end{itemize}
\item \textbf{Return:} \textbf{\emph{partial result}}\emph{.}\medskip
\end{enumerate}
\end{lyxalgorithm}

The following example displays the result of applying Algorithm~\ref{alg:detrig}. 
\begin{example}
\label{exa:s-polynomial}We apply Algorithm~\ref{alg:detrig} to
the expression 
\[
\cos((0,0,0,1,1,3)\cdot\a(r,s))-1.
\]
This expression expands to\begin{align*}
	& \frac{1}{2\left(r+s\right)^{4}\left(s+1\right)^{2}}  \times \\
	& \quad\quad
	\left(
		\begin{array}{l}
			-2 r^{4} s^{2} 
			- 4 r^{4} s 
			- 2 r^{4} 
			- 14 r^{3} s^{3} 
			- 10 r^{3} s^{2} 
			- 8 r^{3} s 
			- 30 r^{2} s^{4} 
			- 18 r^{2} s^{3} \\
			- 12 r^{2} s^{2} 
			- 18 r s^{5} 
			- 30 r s^{4}
			- 8 r s^{3}
			- 2 s^{5}
			- 2 s^{4} \\
			+ r^{2} s \sqrt{r^{2} + 2 r s} \sqrt{16 r^{2} s + 16 r s^{2} + 16 r s} 
			- 2 r s^{3} \sqrt{r^{2} + 2 r s} \sqrt{2 s + 1}	\\
			+ 3 r^{2} s \sqrt{2 s + 1} \sqrt{16 r^{2} s + 16 r s^{2} + 16 r s} 			  
			+ 10 r s^{2} \sqrt{r^{2} + 2 r s} \sqrt{2 s + 1} \\
			+ 2 r s^{2} \sqrt{r^{2} + 2 r s} \sqrt{16 r^{2} s + 16 r s^{2} + 16 r s} 
			- 6 s^{3} \sqrt{r^{2} + 2 r s} \sqrt{2 s + 1} \\
			+ 6 r s^{2} \sqrt{2 s + 1} \sqrt{16 r^{2} s + 16 r s^{2} + 16 r s} 
			- 6 s^{4} \sqrt{r^{2} + 2 r s} \sqrt{2 s + 1} \\			
			- 3 s^{3} \sqrt{r^{2} + 2 r s} \sqrt{16 r^{2} s + 16 r s^{2} + 16 r s} 
			- 2 r^{3} \sqrt{r^{2} + 2 r s} \sqrt{2 s + 1}\\
			- s^{3} \sqrt{2 s + 1} \sqrt{16 r^{2} s + 16 r s^{2} + 16 r s} 
			+ 2r^{3}s\sqrt{r^{2} + 2 r s} \sqrt{2 s + 1}  \\
			+ 6 r^{2} s^{2} \sqrt{r^{2} + 2 r s} \sqrt{2 s + 1}  
			- 2 r^{2} s \sqrt{r^{2} + 2 r s} \sqrt{2 s + 1} 
		\end{array}
	\right). 		
\end{align*}We eliminate the denominator, which is never zero on $\Delta_{3}$,
and eliminating the radicals as described in Algorithm~\ref{alg:detrig}
yields the following two-variable polynomial (factorized for the sake
of expressing it more compactly): 
\[
-16s^{2}\left(r+s\right)^{4}\left(s+1\right)^{4}\parenth{\begin{array}{l}
r^{6}s-4r^{6}+12r^{5}s^{2}-26r^{5}s-4r^{5}+54r^{4}s^{3}\\
\ \ -40r^{4}s^{2}-7r^{4}s+108r^{3}s^{4}+28r^{3}s^{3}+12r^{3}s^{2}\\
\ \ \ \ +81r^{2}s^{5}+60r^{2}s^{4}+14r^{2}s^{3}-18rs^{5}-16rs^{4}+s^{5}
\end{array}}^{2}.
\]
By construction, this polynomial has the property that, if $(r_{0},s_{0})\in\Delta_{3}$
is such that $(0,0,0,1,1,3)\cdot\a(r_{0},s_{0})=2\pi,$ then $(r_{0},s_{0})$
necessarily is also a root of the above polynomial. 
\end{example}

\section{\label{sec:Examples}Examples of compact $3$-packings}

In this section we display an arbitrary selection of compact $3$-packings.
We stress that this list is far from exhaustive.
\begin{example}\label{example1}
The values $s_{0}\approx0.299248$ and $r_{0}\approx0.438405$ are
approximations to the unique solution in $\Delta_{3}$ of the equations
\begin{align*}
(0,0,0,1,1,3)\cdot\a(r,s) & =2\pi,\\
(1,0,3,0,2,0)\cdot\b(r,s) & =2\pi,\\
(0,0,2,4,0,4)\cdot\c(r,s) & =2\pi.
\end{align*}
Exact values of $s_{0}$ and $r_{0}$ are as roots of the respective
polynomials
\[
s^{6}-54s^{5}+175s^{4}-68s^{3}+15s^{2}-6s+1
\]
 and 
\[
5r^{6}+38r^{5}+39r^{4}-28r^{3}+19r^{2}-10r+1.
\]
Figure~\ref{fig:(0, 0, 0, 1, 1, 3) (1, 0, 3, 0, 2, 0) (0, 0, 2, 4, 0, 4)}
displays a compact $3$-packing $P$ with $\rad(P)=\{s_{0},r_{0},1\}$.
\begin{figure}[H]
\fbox{\includegraphics[width=1\textwidth]{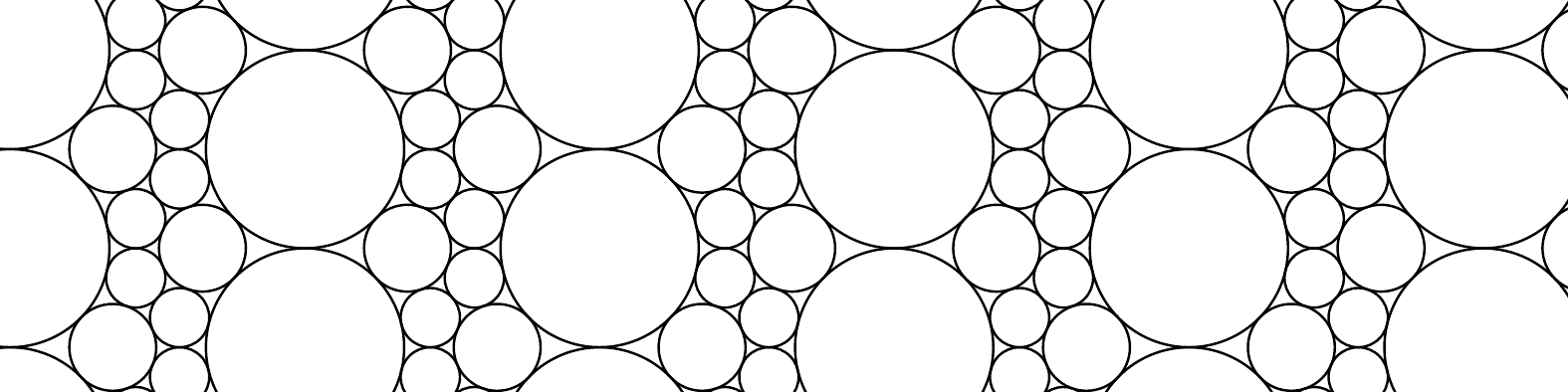}}
\caption{\label{fig:(0, 0, 0, 1, 1, 3) (1, 0, 3, 0, 2, 0) (0, 0, 2, 4, 0, 4)}
	A compact circle-packing $P$
	with $\protect\rad(P)=\{s_{0},r_{0},1\}$ where $s_{0}\approx0.299248$ and $r_{0}\approx0.438405$	
	 are roots of the polynomials as given in Example~\ref{example1}.	 
	}%
\end{figure}%
\end{example}

{}
\begin{example}\label{example2}
The values $s_{0}\approx0.468169$ and $r_{0}\approx0.822210$ are
approximations to the unique solution in $\Delta_{3}$ of the equations
\begin{align*}
(0,0,0,1,1,3)\cdot\a(r,s) & =2\pi,\\
(0,0,3,2,2,0)\cdot\b(r,s) & =2\pi,\\
(0,2,2,0,0,4)\cdot\c(r,s) & =2\pi.
\end{align*}
Exact values of $s_{0}$ and $r_{0}$ are as roots of the respective
polynomials 
\[
49s^{9}-340s^{8}+1200s^{7}-1600s^{6}-378s^{5}+560s^{4}+64s^{3}-64s^{2}-7s+4
\]
 and 
\[
2r^{9}+17r^{8}+120r^{7}+56r^{6}+60r^{5}-2r^{4}-88r^{3}-40r^{2}+2r+1.
\]
Figure~\ref{fig:(0, 0, 0, 1, 1, 3), (0, 0, 3, 2, 2, 0), (0, 2, 2, 0, 0, 4)}
displays a compact $3$-packing $P$ with $\rad(P)=\{s_{0},r_{0},1\}$.
\begin{figure}[H]
\fbox{\includegraphics[width=1\textwidth]{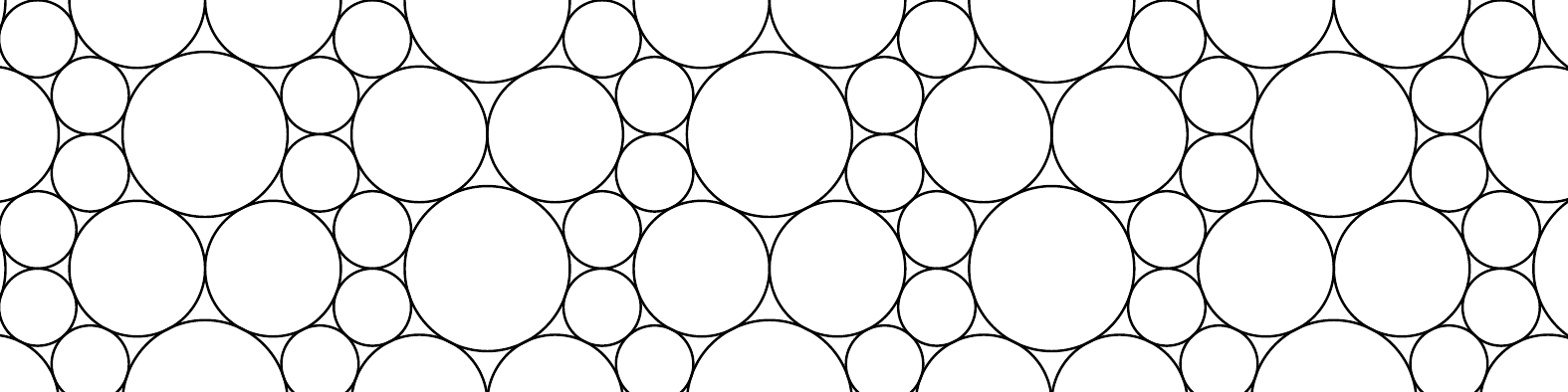}}\caption{\label{fig:(0, 0, 0, 1, 1, 3), (0, 0, 3, 2, 2, 0), (0, 2, 2, 0, 0, 4)}
	A compact circle-packing $P$
	with $\protect\rad(P)=\{s_{0},r_{0},1\}$ where $s_{0}\approx0.468169$ and $r_{0}\approx0.822210$
	 are roots of the polynomials
	 as given in Example~\ref{example2}.	 
}%
\end{figure}
\end{example}

{}
\begin{example}\label{example3}
The values $s_{0}\approx0.484497$ and $r_{0}\approx0.865150$ are
approximations to the unique solution in $\Delta_{3}$ of the equations
\begin{align*}
(0,0,0,1,1,3)\cdot\a(r,s) & =2\pi,\\
(2,0,3,0,2,0)\cdot\b(r,s) & =2\pi,\\
(0,0,1,4,0,2)\cdot\c(r,s) & =2\pi.
\end{align*}
Exact values of $s_{0}$ and $r_{0}$ are as roots of the respective
polynomials 
\begin{align*}
 & s^{11}-824s^{10}+5452s^{9}-14096s^{8}+24438s^{7}-20688s^{6}\\
 & \quad\quad+15404s^{5}-13520s^{4}-3375s^{3}+5480s^{2}+192s-512
\end{align*}
 and 
\begin{align*}
 & r^{11}+18r^{10}+132r^{9}+568r^{8}+1454r^{7}+1788r^{6}\\
 & \quad\quad+308r^{5}-680r^{4}+121r^{3}-670r^{2}-1120r+128.
\end{align*}
Figure~\ref{fig:(0, 0, 0, 1, 1, 3), (2, 0, 3, 0, 2, 0), (0, 0, 1, 4, 0, 2)}
displays a compact $3$-packing $P$ with $\rad(P)=\{s_{0},r_{0},1\}$.
\begin{figure}[H]
\fbox{\includegraphics[width=1\textwidth]{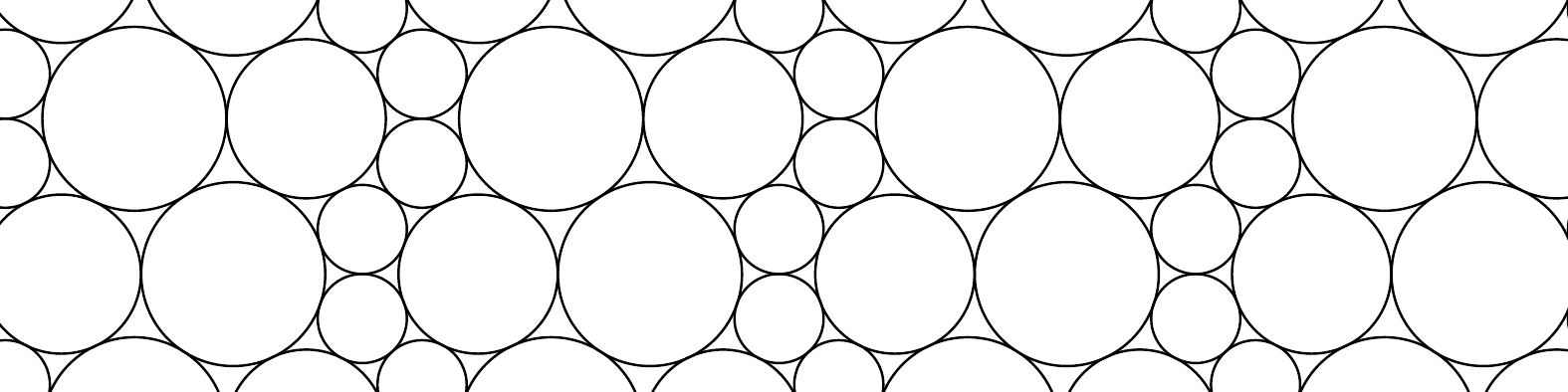}}\caption{\label{fig:(0, 0, 0, 1, 1, 3), (2, 0, 3, 0, 2, 0), (0, 0, 1, 4, 0, 2)}
A compact circle-packing $P$
	with $\protect\rad(P)=\{s_{0},r_{0},1\}$ where $s_{0}\approx0.484497$ and $r_{0}\approx0.865150$
	are roots of the polynomials
	as given in Example~\ref{example3}.	 
}%
\end{figure}
\end{example}

{}
\begin{example}\label{example4}
The values $s_{0}\approx0.275178$ and $r_{0}\approx0.948799$ are
approximations to the unique solution in $\Delta_{3}$ of the equations
\begin{align*}
(0,0,0,2,2,0)\cdot\a(r,s) & =2\pi,\\
(0,0,0,2,6,0)\cdot\b(r,s) & =2\pi,\\
(0,1,3,0,0,6)\cdot\c(r,s) & =2\pi.
\end{align*}
Exact values of $s_{0}$ and $r_{0}$ are as roots of the respective
polynomials 
\[
20s^{4}-36s^{3}+13s^{2}+6s-2
\]
 and 
\[
5r^{4}+24r^{3}+15r^{2}-38r-2.
\]
Figure~\ref{fig:(0, 0, 0, 2, 2, 0) (0, 0, 0, 2, 6, 0) (0, 1, 3, 0, 0, 6)}
displays a compact $3$-packing $P$ with $\rad(P)=\{s_{0},r_{0},1\}$.
\begin{figure}[H]
\fbox{\includegraphics[width=1\textwidth]{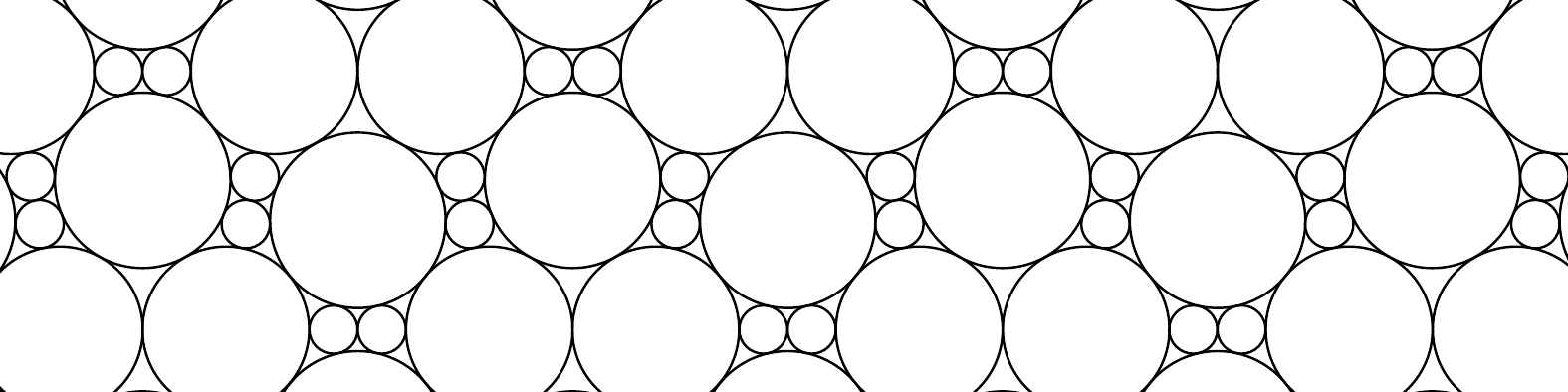}}\caption{\label{fig:(0, 0, 0, 2, 2, 0) (0, 0, 0, 2, 6, 0) (0, 1, 3, 0, 0, 6)}
A compact circle-packing $P$
	with $\protect\rad(P)=\{s_{0},r_{0},1\}$ where $s_{0}\approx0.275178$ and $r_{0}\approx0.948799$
	 are roots of the polynomials
	 as given in Example~\ref{example4}.	 
}%
\end{figure}
\end{example}

{}
\begin{example}\label{example5}
The values $s_{0}\approx0.237538$ and $r_{0}\approx0.667499$ are
approximations to the unique solution in $\Delta_{3}$ of the equations
\begin{align*}
(0,0,0,2,2,0)\cdot\a(r,s) & =2\pi,\\
(1,1,0,2,2,0)\cdot\b(r,s) & =2\pi,\\
(2,1,1,2,0,2)\cdot\c(r,s) & =2\pi.
\end{align*}
Exact values of $s_{0}$ and $r_{0}$ are as roots of the respective
polynomials 
\begin{align*}
 & 64s^{12}-704s^{11}+15792s^{10}-33536s^{9}+29964s^{8}-4540s^{7}\\
 & \quad\quad-4859s^{6}+3322s^{5}-1757s^{4}+136s^{3}+307s^{2}-102s+9
\end{align*}
 and 
\begin{align*}
 & r^{12}-4r^{11}+66r^{10}-3324r^{9}+727r^{8}+56696r^{7}+81500r^{6}\\
 & \quad\quad-29400r^{5}-46657r^{4}+332r^{3}+5314r^{2}+276r+9.
\end{align*}
Figure~\ref{fig:(0, 0, 0, 2, 2, 0) (1, 1, 0, 2, 2, 0) (2, 1, 1, 2, 0, 2)}
displays a compact $3$-packing $P$ with $\rad(P)=\{s_{0},r_{0},1\}$.
\begin{figure}[H]
\fbox{\includegraphics[width=1\textwidth]{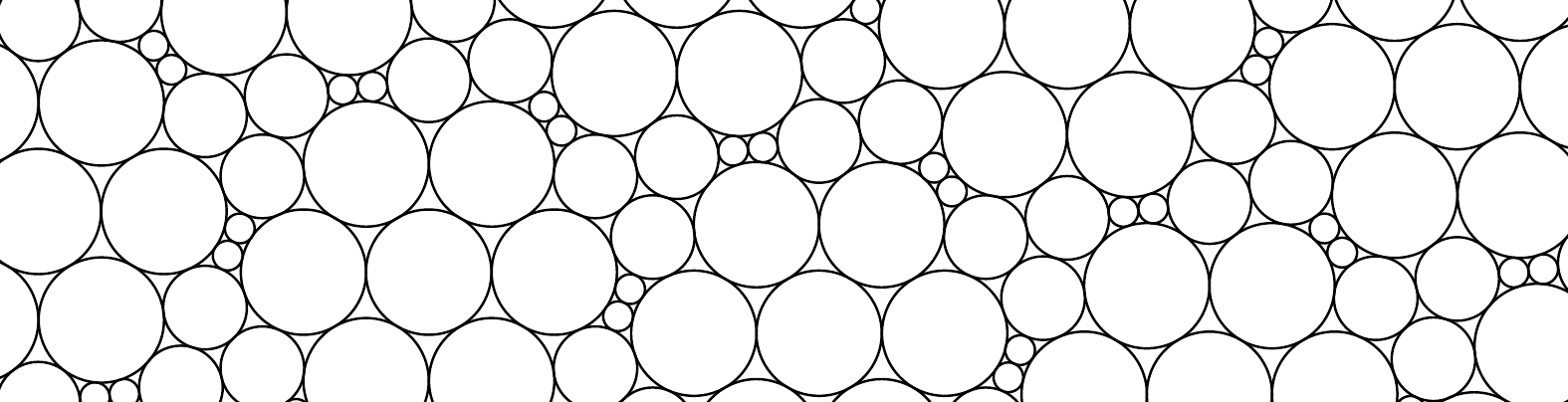}}\caption{\label{fig:(0, 0, 0, 2, 2, 0) (1, 1, 0, 2, 2, 0) (2, 1, 1, 2, 0, 2)}%
A compact circle-packing $P$
	with $\protect\rad(P)=\{s_{0},r_{0},1\}$ where  $s_{0}\approx0.237538$ and $r_{0}\approx0.667499$
	are roots of the polynomials
	as given in Example~\ref{example5}.	 
}%
\end{figure}
\end{example}

	\begin{acknowledgement*}
	\PeopleAck

	\ClaudeLeonAck
	\end{acknowledgement*}

	\bibliographystyle{plain}
	\bibliography{bib}

\end{document}